\theoremstyle{plain}
\newtheorem*{thm}{Main Theorem}
\newtheorem{theorem}{Theorem}[section]
\newtheorem{lemma}[theorem]{Lemma}
\newtheorem{proposition}[theorem]{Proposition}
\newtheorem{corollary}[theorem]{Corollary}
\theoremstyle{definition}
\newtheorem{definition}[theorem]{Definition}
\newtheorem{notation}[theorem]{Notation}
\theoremstyle{remark}
\newtheorem*{ack}{Acknowledgement}
\numberwithin{equation}{section}
\def\gcd{\mathrm{gcd}}
\def\ord{\mathrm{ord}}
\title[Igusa's local zeta function of Thom-Sebastiani type functions]{The Igusa local zeta function\\ of certain Thom-Sebastiani type functions} 
\author[Q.T. L\^e]{Quy Thuong L\^e}
\address{VNU University of Science, Vietnam National University, Hanoi \newline \indent 
	334 Nguyen Trai Street, Thanh Xuan District, Hanoi, Vietnam}
\email{leqthuong@gmail.com}
\thanks{}
\author[H.L. Nguyen]{Hoang Long Nguyen}
\address{VNU University of Science, Vietnam National University, Hanoi \newline \indent 
	334 Nguyen Trai Street, Thanh Xuan District, Hanoi, Vietnam}
\email{nguyenhoanglong\_t66@hus.edu.vn}
\thanks{}
\keywords{$p$-adic fields, $p$-adic zeta function, Newton polyhedron, Newton non-critical polynomials}
\subjclass[2020]{Primary 14B05, 11S80, 11S40; Secondary 52B05}
\begin{document}           
	\begin{abstract}
	In this paper, we give an explicit formula of the Igusa local zeta function of a Thom-Sebastiani type sum of two separated-variable Newton non-critical polynomials. Data for the description are available on their Newton polyhedra.

	\end{abstract}
	\maketitle                 

	
\section{Introduction}
	
Let $p$ be a prime, and let $K$ be a $p$-adic field whose ring of integral numbers is $\mathcal O=\mathcal O_K$. Fix a uniformizing parameter $\pi$ of $\mathcal O$. Assume $\mathbb F_q=\mathcal O/(\pi)$. Let $v=\ord_{\pi}$ be the valuation map with $v(0)=\infty$, and define $|\xi|=q^{-v(\xi)}$ for any $\xi\in K^*$. Since $K$ is a locally compact totally disconnected group, there is a unique canonical Haar measure $|dx|$ of $(K^n,+)$ normalized by $|dx|(\mathcal O^n)=1$.

Let $f(x)$ be a polynomial over $\mathcal O$ in $n$ variables $x=(x_1,\dots,x_n)$. Consider the Igusa local zeta function of $f$ defined as follows
$$Z(f;s):=\int_{\mathcal O^n}|f(x)|^s|dx|.$$
It is a fact that $Z(f;s)$ is a rational function in $q^{-s}$. The Igusa local zeta function is an important object in several branch of mathematics such as number theory and singularity theory. There are conjectures such as the the holomorphy conjecture and the monodromy conjecture relate the poles of $Z(f;s)$ to the monodromy of singular point of the hypersurface defined by $f$.  

Our motivation in this topic is to prove that if the $p$-adic monodromy conjecture is true for $f(x)$ and $g(y)$ in separated variables $x$ and $y$, then the conjecture is also true for the function 
$$(f\oplus g)(x,y)=f(x)+g(y).$$ 
In \cite[Section 5.1]{Denef91}, this problem is mentioned and suggested to solve using resolution of singularity and exponential sums. However, there is the lack of an explicit description of $Z(f\oplus g;s)$ in terms of $Z(f;s)$, $Z(g;s)$ and data in the common vanishing of $f$ and $g$. This work not only gives a geometric solution to this problem without mentioning the exponential sum but also helps to answer the similar question on the holomorphy conjecture.  

In this paper, under the Newton non-criticality condition for polynomials $f$ and $g$, we can describe $Z(f \oplus g ;s)$ completely explicitly in terms of $1$-degree $1$-variable polynomials $c_{a,b}(s)$ constructed from the Newton polyhedra of $f$ and $g$. Namely, we prove the following theorem.
	
\begin{thm}[Theorem \ref{maintheorem}]
	Let $f(x) \in \mathcal O[x], g(y) \in \mathcal O [y]$ be two Newton non-critical polynomials. Then the Igusa local zeta function of $f \oplus g$ is described as follows
	\begin{equation*}
	Z(f \oplus g ;s) = \dfrac{P(q^{-s})}{(1 - q^{-1-s})\prod_{a,b}(1 - q^{c_{a,b}(s)})},
	\end{equation*}
	where the product runs over all $(a,b)$ with $a$ (resp. $b$) primitive normal vectors of the facets of the Newton polyhedron of $f$ (resp. $g$), and $P$ is a $1$-variable rational polynomial.
\end{thm}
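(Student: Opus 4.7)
The plan is to extend the Denef--Hoornaert style computation of $Z(f;s)$ for Newton non-critical $f$ to the Thom--Sebastiani setting, by integrating $|f(x)+g(y)|^s$ over a decomposition of $\mathcal O^n\times\mathcal O^m$ that respects the dual fans of both $\Gamma_f$ and $\Gamma_g$, with particular care on the ``resonance'' locus where $v(f(x))=v(g(y))$.

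First, I fix simplicial refinements $\Sigma_f,\Sigma_g$ of the dual fans of $\Gamma_f,\Gamma_g$ whose rays include the primitive inward facet normals $a,b$. Up to a null set, I partition $\mathcal O^n\times\mathcal O^m$ according to the valuation vectors $(k,\ell)=(v(x),v(y))$, and group the resulting pieces $E_{k,\ell}=\{(x,y):v(x_i)=k_i,\ v(y_j)=\ell_j\}$ by the pair of cones $(\Delta,\Delta')\in\Sigma_f\times\Sigma_g$ containing $(k,\ell)$ in their relative interior. On each $E_{k,\ell}$, Newton non-criticality yields the clean expressions $v(f(x))=N_f(k)+v(F_\Delta(\bar x))$ and $v(g(y))=N_g(\ell)+v(G_{\Delta'}(\bar y))$, where $N_f,N_g$ are the support functions of $\Gamma_f,\Gamma_g$, where $F_\Delta,G_{\Delta'}$ are the face polynomials attached to the minimal faces supported at $k,\ell$, and $\bar x,\bar y$ are the unit parts of $x,y$.

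Second, on each piece I split the integrand according to the trichotomy for $v(f(x))$ versus $v(g(y))$. In the two strict cases the ultrametric identity $v((f\oplus g)(x,y))=\min\{v(f(x)),v(g(y))\}$ makes the local contribution factor; summing the resulting geometric series over the lattice points of each simplicial cone, via the standard generating-function formula for simplicial cones, produces rational factors of the form $1-q^{c_a(s)}$ and $1-q^{c_b(s)}$ in the denominator. Coupling the summations across cone pairs $(\Delta,\Delta')$ then merges these into the product $\prod_{a,b}\bigl(1-q^{c_{a,b}(s)}\bigr)$ appearing in the theorem, where $c_{a,b}(s)$ encodes the facet of $\Gamma_{f\oplus g}$ with primitive normal proportional to $(N_g(b)\,a,\,N_f(a)\,b)$, while the distinguished factor $1-q^{-1-s}$ arises from the $k=0$ or $\ell=0$ strata on which one of $f$ or $g$ is a unit.

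Third, the resonant locus $v(f(x))=v(g(y))$ is the delicate part. After the substitution $x=\pi^{k}u$, $y=\pi^{\ell}w$ with $u\in(\mathcal O^\times)^n$, $w\in(\mathcal O^\times)^m$, the local integrand becomes $|F_\Delta(u)+G_{\Delta'}(w)|^s$ up to a common $\pi$-power, and Newton non-criticality of $f,g$ should force the hypersurface $\{F_\Delta+G_{\Delta'}=0\}$ to remain smooth on the relevant torus strata over $\mathbb F_q$; a Hensel-style evaluation then turns this contribution into a rational function that, once summed over $(k,\ell)$, is absorbed into the polynomial numerator $P(q^{-s})$ without creating new poles. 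I expect this resonance analysis to be the main obstacle: verifying that $F_\Delta+G_{\Delta'}$ inherits a suitable non-critical property from $f$ and $g$, and that no additional denominator factors survive the global assembly, is precisely what makes the explicit closed form possible. Once this is in hand, the remainder of the argument is a systematic bookkeeping that collects the three regions into the stated formula.
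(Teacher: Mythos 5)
Your decomposition by valuation vectors grouped along the product fan $\Sigma_f\times\Sigma_g$, together with a stability principle (Corollary \ref{second-important}) to neutralize the dominating summand once the two $\pi$-powers are far apart, is the right starting point and shares with the paper's argument the use of Newton non-criticality to guarantee that the face sums $f_\tau\oplus g_\gamma$ have no critical points on the torus. But the gap is exactly where you locate the ``delicate part,'' and it has the wrong sign: you claim the resonant locus is absorbed into the polynomial numerator, whereas the factor $1-q^{c_{a,b}(s)}$ is in fact \emph{produced} by the resonance. In the $(k,l)$-plane the resonance ray is the primitive direction $(e',e)$ with $e=m_f(a)/\gcd(m_f(a),m_g(b))$, $e'=m_g(b)/\gcd(m_f(a),m_g(b))$; along it the common $\pi$-power is $\mathrm{lcm}(m_f(a),m_g(b))$ and each step costs a Jacobian $q^{-(e'|a|+e|b|)}$, which is precisely the data in $c_{a,b}(s)$. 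The two strict regions $km_f(a)<lm_g(b)$ and $km_f(a)>lm_g(b)$ are $2$-cones not spanned by the standard basis but each having the resonance ray as a boundary ray, so they too feed $1-q^{c_{a,b}(s)}$ rather than any ``uncoupled'' factors $1-q^{c_a(s)}$, $1-q^{c_b(s)}$; there is no ``merging across cone pairs'' step that produces $c_{a,b}$. The factor $1-q^{-1-s}$ likewise is not tied to the strata $k=0$ or $\ell=0$: it arises from the Hensel/stationary-phase evaluation of each torus integral of a non-critical face sum, in every stratum.

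What your sketch does not supply is how to close the loop in the near-resonance band, where $|km_f(a)-lm_g(b)|$ is small, the stability lemma gives nothing, and $v(f(x))$ is not constant on a given $E_{k,\ell}$ because $v(f)>N_f(k)$ whenever the reduced face polynomial vanishes at $\bar x$. The paper's mechanism is a fixed-point argument: set $Z_{c,d}(s)=Z_{\Delta_1\times\Delta_2}(\pi^{c}f_\tau\oplus\pi^{d}g_\gamma;s)$, peel off the shell $v(x)=0$ or $v(y)=0$ by one application of the stationary-phase formula (this is where $1-q^{-1-s}$ enters), and descend to $Z_{c_{k+1},d_{k+1}}$ via the Euclidean-type map $\phi_{c,d}$ of Section \ref{Sec32}. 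The essential new input is the periodicity of $(c_k,d_k)$ with period $p=e+e'-1$ and the accumulated exponents $\sum\mu_k=\mathrm{lcm}(c,d)$, $\sum\nu_k=e|b|+e'|a|$ (Lemma \ref{technicallemma}), which yield the closed relation $(1-q^{c_{a,b}(s)})Z_{c,d}(s)=P(q^{-s})/(1-q^{-1-s})$. Without this periodicity, or equivalently without subdividing the $(k,l)$-quadrant along the resonance wall (which amounts to passing from $\Sigma_f\times\Sigma_g$ to a fan compatible with $\Gamma_{f\oplus g}$ in $\mathbb R^{n+m}$), the ``systematic bookkeeping'' you defer to does not terminate.
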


The notation and techniques we use in this paper partly come from Zuniga-Galindo's works on the Igusa stationary phase formula (see \cite{Zuniga-Galindo2}).


	




	
	

\section{Igusa's stationary phase formula}
\subsection{Zuniga-Galindo's setting}
Let us recall the notation and results about Igusa's stationary phase formula mentioned in \cite{Zuniga-Galindo2}.
Let $f(x)$ be a polynomial in $\mathcal{O}[x]$, and let $(P_k)_{k\geq 1}$ be a sequence of points in $\mathcal{O}^n$. Define a sequence $(f_{P_1,\dots,P_k})_{k\ge1}$ of polynomials in $\mathcal O[x]$ as follows 
\begin{align*}
f_{P_1}(x)&= \pi^{-e_{P_1}(f)}f(P_1 + \pi x),\\
f_{P_1,\dots,P_{k+1}}(x)&= \pi^{-e_{P_1,\dots,P_{k+1}}(f)}f_{P_1,\dots,P_k}(P_{k+1} + \pi x),\ k\ge 1,
\end{align*}
where $e_{P_1}(f)$ (resp. $e_{P_1,\dots,P_{k+1}}(f)$) denotes the smallest valuation of the coefficients in the $x$-polynomial $f(P_1 + \pi x)$ (resp. $f_{P_1,\dots,P_k}(P_{k+1} + \pi x)$). Fix a lifting $R$ of $\mathbb F_q$ in $\mathcal O$, that is, $R$ is mapped bijectively onto $\mathbb F_q$ along the canonical projection. For any subset $\overline{D}$ of $\mathbb F_q^n$, let $D$ denote the preimage of $\overline{D}$ under the canonical projection $\mathcal O^n\to\mathbb F_q^n$. Put
\begin{equation*}
S(f,D):= \{(x_1,\dots,x_n) \in R^n \mid (\overline{x}_1,\dots,\overline{x}_n) \text{ is a singular point of }\overline{f}\text{ in }\overline{D}\},
\end{equation*}
where $\overline{f}(x) \in \mathbb F_q[x]$ is the reduction modulo $\pi$ of $f(x)$. For simplicity, we also define
\begin{align*}
\nu(\bar{f},D)&: = q^{-n}\#\{\overline{P} \in \overline{D}\mid \overline{f}(\overline{P}) \ne 0\},\\
\sigma(\overline{f},D)&: = q^{-n}\#\{\overline{P} \in \overline{D}\mid \overline{f}(\overline{P}) = 0, \overline{P} \text{ is a smooth point of }\overline{f}\}.
\end{align*}
Then, Igusa's Stationary Phase Formula can be written as follows
\begin{align*}
\int_D |f(x)|^s |dx| = \nu(\bar{f},D) + \sigma(\bar{f},D)\dfrac{(1 - q^{-1})q^{-s}}{1 - q^{-1-s}} + \sum_{P \in S(f,D)}q^{-n - e_p(f)s}\int_{\mathcal O^n}|f_P(x)|^s|dx|.
\end{align*}
One also considers the sequence of sets $I_k=I_k(f,D)$, $k\ge1$, defined by
\begin{align*}
I_1&= S(f,D),\\
I_{k + 1}&= \left\{(P_1,\dots,P_{k+1})\mid (P_1,\dots,P_k)\in I_k, P_{k+1} \in S(f_{P_1,\dots,P_k},\mathcal{O}^n)\right\},\ k\ge 1,
\end{align*}
which arises naturally when applying consecutively the stationary phase formula. 
\begin{notation}(Zuniga-Galindo \cite{Zuniga-Galindo1})
Let $f\in \mathcal O[x]$, and let $P = (a_1,\dots,a_n) \in \mathcal O^n$. If $P$ is not a singular point of $f$, put
\begin{equation*}
L(f,P):= \min \left\{v\left(f(P)\right),v\left(\dfrac{\partial f}{\partial x_1}(P)\right),\dots,v\left(\dfrac{\partial f}{\partial x_n}(P)\right)\right\}.
\end{equation*}
If $P$ is not a critical point of $f$, i.e. there exists an $i\in \{1,\dots,n\}$ such that, $\frac{\partial f}{\partial x_i}(P) \not= 0$, then one puts
\begin{equation*}
\ell(f,P):=\min \left\{v\left(\dfrac{\partial f}{\partial x_1}(P)\right),\dots,v\left(\dfrac{\partial f}{\partial x_n}(P)\right)\right\}.
\end{equation*}
\end{notation}

\begin{proposition}[Zuniga-Galindo \cite{Zuniga-Galindo2}]
Let $D \subseteq \mathcal O^n$ be a compact set, and let $f \in \mathcal O[x]$ be a polynomial such that $f$ does not have any singular points in $D$. Then 
\begin{equation*}
C(f,D):= \sup_{P \in D} L(f,P) < + \infty.
\end{equation*}
\end{proposition}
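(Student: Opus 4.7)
The plan is a standard $p$-adic compactness argument. The hypothesis will give pointwise nonvanishing of at least one of the polynomials $f, \partial f/\partial x_1, \dots, \partial f/\partial x_n$ at every point of $D$; $\pi$-adic continuity will upgrade this to a local upper bound on $L(f, \cdot)$; and compactness of $D$ will finish.

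First I would work pointwise. For each $P \in D$, since $P$ is not a singular point of $f$, at least one of the values $f(P), \partial f/\partial x_1(P), \dots, \partial f/\partial x_n(P)$ is nonzero. I would fix such a polynomial $h_P \in \{f, \partial f/\partial x_1, \dots, \partial f/\partial x_n\}$ and set $m_P := v(h_P(P)) < +\infty$, which already gives $L(f, P) \leq m_P$ at the point $P$.

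Next I would spread this bound to a neighborhood of $P$ using $\pi$-adic continuity. Since $h_P$ has coefficients in $\mathcal O$, the polynomial identity $h_P(P + \pi^N y) \equiv h_P(P) \pmod{\pi^N}$ holds in $\mathcal O[y]$ for every $N \geq 1$. Taking $N = m_P + 1$ and letting $B_P$ be the open ball of radius $q^{-N}$ around $P$ in $\mathcal O^n$, the ultrametric inequality forces $v(h_P(Q)) = m_P$ for every $Q \in B_P$. Hence $L(f, Q) \leq m_P$ throughout $B_P \cap D$.

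Finally, the collection $\{B_P\}_{P \in D}$ is an open cover of $D$, so by compactness I would extract a finite subcover $B_{P_1}, \dots, B_{P_r}$ and conclude $C(f, D) \leq \max_{1 \leq i \leq r} m_{P_i} < +\infty$. I do not anticipate any real obstacle: the continuity step reduces to a trivial polynomial expansion, and the remainder is pure topology. The only choice with any content is that of the witness $h_P$ at each point, and this is dictated by the non-singularity hypothesis.
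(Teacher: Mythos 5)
Your proof is correct. The paper cites Zuniga-Galindo for this proposition without reproducing a proof, but your argument is the standard one: the non-singularity hypothesis gives a nonvanishing witness $h_P$ at each point, the ultrametric inequality combined with the congruence $h_P(P+\pi^N y) \equiv h_P(P) \pmod{\pi^N}$ locks $v(h_P)$ on a small ball around $P$, and compactness of $D$ finishes. All the steps check out; in particular the choice $N = m_P + 1$ is exactly what is needed for the strong triangle inequality to force $v(h_P(Q)) = m_P$ on the ball.
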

\begin{corollary}
Let $D \subseteq \mathcal O^n$ be a compact set, and $f(x) \in \mathcal O[x]$ be a polynomial such that $f$ does not have any critical points in $D$. Then 
\begin{equation*}
c(f,D):= \sup_{P \in D} \ell(f,P) < + \infty. 
\end{equation*}
\end{corollary}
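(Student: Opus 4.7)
I would prove this corollary by mimicking the compactness argument behind the preceding proposition, using the shorter list of quantities defining $\ell$ in place of the fuller list defining $L$. The hypothesis that $f$ has no critical points in $D$ says precisely that for every $P \in D$ at least one partial $(\partial f/\partial x_i)(P)$ is nonzero, so $\ell(f,P)$ is a well-defined nonnegative integer at each $P\in D$ and the only task is to bound it uniformly.

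The cleanest route is to introduce the auxiliary function
\[
M\colon D\to\mathbb{R}_{\ge 0},\qquad M(P):=\max_{1\le i\le n}\Bigl|\tfrac{\partial f}{\partial x_i}(P)\Bigr|.
\]
This is continuous on $\mathcal{O}^n$ as the maximum of finitely many continuous functions, and non-criticality forces $M>0$ on $D$. Compactness of $D$ then makes $M$ attain a positive minimum there, say $\min_{P\in D} M(P) = q^{-K}$ with $K\in\mathbb{Z}$; unwinding the definitions, this is exactly the assertion $\ell(f,P)\le K$ for all $P\in D$, whence $c(f,D)\le K<+\infty$.

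An equivalent, more ultrametric-flavored presentation runs as follows: for each $P\in D$ pick an index $i_P$ with $(\partial f/\partial x_{i_P})(P)\ne 0$, set $n_P := v\bigl((\partial f/\partial x_{i_P})(P)\bigr)$, and use the strong triangle inequality to see that $v(\partial f/\partial x_{i_P})$ is identically equal to $n_P$ on the open ball $U_P := P + \pi^{n_P+1}\mathcal{O}^n$; thus $\ell(f,Q)\le n_P$ on $U_P$, and a finite subcover $\{U_{P_j}\}_{j=1}^{N}$ of the compact set $D$ yields the uniform bound $c(f,D)\le \max_j n_{P_j}$. I do not expect a genuine obstacle here: the statement is essentially an application of the principle that a positive continuous real-valued function on a compact $p$-adic set is bounded below, applied to $M$ in place of the larger function $\max(|f|,M)$ that underlies the preceding proposition.
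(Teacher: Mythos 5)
The paper states this corollary without proof, so there is no ``paper's own proof'' to compare against; it is clearly intended as an immediate analogue of the preceding proposition (with the pair $(\ell, c, \text{critical point})$ replacing $(L, C, \text{singular point})$), and indeed it does not follow formally from that proposition, since $L(f,P)\le\ell(f,P)$ gives $C(f,D)\le c(f,D)$, which is the wrong direction. Your argument correctly fills this in. Both of your formulations are valid: the first one observes that $M(P)=\max_i\bigl|\tfrac{\partial f}{\partial x_i}(P)\bigr|$ is continuous and strictly positive on the compact set $D$, hence bounded below by some $q^{-K}$, which unwinds to $\ell(f,P)\le K$; the second exploits the ultrametric local constancy of $v\bigl(\tfrac{\partial f}{\partial x_{i_P}}\bigr)$ on the ball $P+\pi^{n_P+1}\mathcal{O}^n$ (which holds because $\partial f/\partial x_{i_P}\in\mathcal{O}[x]$, so perturbing the argument by something in $\pi^{n_P+1}\mathcal{O}^n$ changes the value by an element of $\pi^{n_P+1}\mathcal{O}$) and then extracts a finite subcover. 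No gap; this is exactly the compactness argument one expects, and it mirrors the standard proof of the preceding proposition.
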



\begin{lemma}[Zuniga-Galindo, \cite{Zuniga-Galindo2}]\label{ZG2}
Let $\overline{D} \subseteq \mathbb F_q^n$ and $f\in \mathcal{O}[x]$. Assume that $f$ has no singular points in $D$. Then the following hold.
\begin{itemize}
\item[(i)] If $(P_1,\dots,P_{k+1}) \in I_{k+1}(f,D)$, then 
$$L(f_{P_1,\dots,P_k},0) \le L(f,P_1 + \pi P_2 +\dots + \pi^{k-1}P_k) - k.$$

\item[(ii)] If $k > C(f,D)  + 1$, then $I_k = \varnothing$.
\end{itemize}
\end{lemma}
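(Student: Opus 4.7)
For (i), I start by establishing via induction from the recursive definition the closed form
\[
f_{P_1,\ldots,P_k}(x) \;=\; \pi^{-E_k}\, f\!\bigl(Q_k + \pi^k x\bigr), \qquad E_k := \sum_{i=1}^{k} e_{P_1,\ldots,P_i}(f), \quad Q_k := P_1 + \pi P_2 + \cdots + \pi^{k-1}P_k.
\]
Expanding the right-hand side by Taylor in the divided powers $\tfrac{1}{\alpha!}\partial^\alpha f \in \mathcal{O}[x]$ yields the identity
\[
E_k \;=\; \min_\alpha \Bigl(k|\alpha| + v\bigl(\tfrac{1}{\alpha!}\partial^\alpha f(Q_k)\bigr)\Bigr).
\]
Setting $A := v(f(Q_k))$ and $B := \min_j v(\partial f / \partial x_j (Q_k))$, evaluating at $x=0$ gives $L(f_{P_1,\ldots,P_k},0) = -E_k + \min(A, k+B)$, while $L(f, Q_k) = \min(A, B)$. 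Hence (i) reduces to the numerical inequality
\[
E_k \;\ge\; k + \min(A, k+B) - \min(A, B),
\]
which I will verify by case analysis on $A$ versus $B$.

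The easy case $A \le B$ requires $E_k \ge k$: every $|\alpha| \ge 1$ summand in the identity for $E_k$ is at least $k$, and $A \ge E_{k-1}+1 \ge k$, since the singularity of $P_k$ in $f_{P_1,\ldots,P_{k-1}}$ forces $v(f(Q_k)) \ge E_{k-1}+1$ and each $e_{P_1,\ldots,P_i}(f) \ge 1$. The main obstacle is the case $A > B$, where the full hypothesis $(P_1,\ldots,P_{k+1}) \in I_{k+1}$ becomes essential: the singularity of $P_{k+1}$ in $f_{P_1,\ldots,P_k}$ gives
\[
v\bigl(f(Q_{k+1})\bigr) \ge E_k + 1, \qquad v\bigl(\partial f / \partial x_j (Q_{k+1})\bigr) \ge E_k - k + 1,
\]
with $Q_{k+1} = Q_k + \pi^k P_{k+1}$. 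Expanding both sides by Taylor about $Q_k$ and applying the strict ultrametric inequality, I will rule out the two problematic sub-cases: if $A - B < k$ and $A < 2k$, the strict ultrametric forces $v(f(Q_{k+1})) = A$, hence $E_k \le A - 1$, contradicting $E_k = A$ from the identity under these bounds; and if $A - B \ge k$ and $B < k$, the strict ultrametric forces $v(\partial f/\partial x_j (Q_{k+1})) = B$ at the minimizing $j$, hence $E_k \le B + k - 1$, contradicting $E_k = k + B$. In the remaining admissible configurations every summand in the identity for $E_k$ is $\ge 2k$, so $E_k \ge 2k$, while $k + \min(A, k+B) - \min(A, B) \le 2k$, closing the argument.

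For (ii), I re-index: given $(P_1,\ldots,P_k) \in I_k$ with $k \ge 2$, applying (i) with its $k$ replaced by $k-1$ gives $L(f_{P_1,\ldots,P_{k-1}},0) \le L(f, Q_{k-1}) - (k-1)$. Since $L(g, P) \ge 0$ for any $g \in \mathcal{O}[x]$ and $P \in \mathcal{O}^n$, and since $Q_{k-1}$ has the same reduction as $P_1 \in \overline{D}$, so that $Q_{k-1} \in D$ and $L(f, Q_{k-1}) \le C(f, D)$, we conclude $k-1 \le C(f, D)$, which is the contrapositive of (ii).
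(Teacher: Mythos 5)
The paper does not prove this lemma: it cites it directly to Zuniga-Galindo \cite{Zuniga-Galindo2}, so there is no in-paper argument to compare against. Judged on its own terms, your proof is correct. The key reductions are all sound: the closed form $f_{P_1,\dots,P_k}(x)=\pi^{-E_k}f(Q_k+\pi^kx)$, the divided-power Taylor identity
\[
E_k=\min_\alpha\bigl(k|\alpha|+v\bigl(\tfrac{1}{\alpha!}\partial^\alpha f(Q_k)\bigr)\bigr),
\]
and the translation of (i) into the scalar inequality $E_k\ge k+\min(A,k+B)-\min(A,B)$. Your case analysis is exhaustive: the case $A\le B$ follows from $E_k\ge k$; the two ``bad'' subcases $(A-B<k,\,A<2k)$ and $(A-B\ge k,\,B<k)$ are genuinely impossible under $(P_1,\dots,P_{k+1})\in I_{k+1}$, because the ultrametric pins $v(f(Q_{k+1}))=A$ (resp.\ $v(\partial_jf(Q_{k+1}))=B$ at the minimizing $j$) while $P_{k+1}$'s singularity forces $v(f(Q_{k+1}))\ge E_k+1$ (resp.\ $v(\partial_jf(Q_{k+1}))\ge E_k-k+1$), contradicting $E_k=A$ (resp.\ $E_k=k+B$) read off from the identity; and in the remaining configurations $A\ge 2k$ and $k+B\ge 2k$, so $E_k\ge 2k\ge$ the required right-hand side. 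Part (ii) then follows by your re-indexing, using $Q_{k-1}\in D$ and $L\ge 0$. The one thing worth stating explicitly is that $L(f_{P_1,\dots,P_{k-1}},0)$ and $L(f,Q_{k-1})$ are well-defined (finite) precisely because $Q_{k-1}\in D$ and $f$ has no singular points in $D$; you invoke $L\ge 0$ without remarking on definedness, but the hypothesis covers it. This is the expected argument, and it is carried out carefully.
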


\subsection{A consecutive stationary phase formula}
Let $f$ be in $\mathcal O[x]$ that has no singular points in $D$. It implies from Lemma \ref{ZG2} that 
$$m(f,D):=\max\{k\ge 1\mid I_k \ne \varnothing\}<+\infty.$$
For simplicity, we write $m$ for $m(f,D)$, thus for $(P_1,\dots,P_m) \in I_m$, we have $S(f_{P_1,\dots,P_m},\mathcal{O}^n) = \varnothing$. Applying the stationary phase formula $m$ times, we obtain
\begin{equation}\label{expansion}
	\begin{split}
		\int_D |f(x)|^s|dx| &= \nu(\bar{f},D) + \sigma(\bar{f},D)\dfrac{(1 - q^{-1})q^{-s}}{1 - q^{-1-s}}\\
		&\quad + \sum_{k = 1}^mq^{-kn}\left(\sum_{(P_1,\dots,P_k)\in I_k}\nu(\bar{f}_{P_1,\dots,P_k},\mathcal O^n)q^{-E_{P_1,\dots,P_k}(f)s}\right)\\
		&\quad +\dfrac{(1 - q^{-1})q^{-s}}{1 - q^{-1-s}}\sum_{k = 1}^mq^{-kn}\left(\sum_{(P_1,\dots,P_k)\in I_k}\sigma(\bar{f}_{P_1,\dots,P_k},\mathcal O^n)q^{-E_{P_1,\dots,P_k}(f)s}\right),
	\end{split}
\end{equation}
where 
$$E_{P_1,\dots,P_k}(f) = \sum_{i = 1}^k e_{P_1,\dots,P_i}(f).$$ 
This formula gives us the following lemma.
	
\begin{lemma}[Zuniga-Galindo \cite{Zuniga-Galindo2}] \label{first-important}
Let $\overline{D} \subseteq \mathbb F_q^n$ and $f\in \mathcal{O}[x]$. Assume that $f$ has no singular points in $D$. Then 
\begin{equation*}
\int_D |f(x)|^s |dx| = \dfrac{P(q^{-s})}{1 - q^{-1-s}},
\end{equation*}
where $P$ is a $\mathbb Q$-polynomial in one variable.
\end{lemma}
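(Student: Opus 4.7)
My plan is to read the lemma directly off the expansion \eqref{expansion}, which is already at hand, using Lemma~\ref{ZG2} only to justify that the expansion is a finite sum. The key supporting fact is Lemma~\ref{ZG2}(ii): since $f$ has no singular points in $D$, we have
$$m:=m(f,D)\le C(f,D)+1<+\infty,$$
so the outer index $k$ in \eqref{expansion} ranges over the finite set $\{1,\dots,m\}$. First I would verify that each inner indexing set $I_k$ is also finite. By construction $I_k\subseteq R^{nk}$, and $R$ is a finite set of cardinality $q$, so $\#I_k\le q^{nk}$. Hence every sum occurring in \eqref{expansion} has finitely many terms.

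Next I would observe that the $q^{-s}$-dependence of every summand is rigidly controlled. The exponents $E_{P_1,\dots,P_k}(f)=\sum_{i=1}^k e_{P_1,\dots,P_i}(f)$ are non-negative integers, so each $q^{-E_{P_1,\dots,P_k}(f)s}$ is a monomial $(q^{-s})^{E_{P_1,\dots,P_k}(f)}$ in $q^{-s}$, while the coefficients $\nu(\bar f_{P_1,\dots,P_k},\mathcal O^n)$ and $\sigma(\bar f_{P_1,\dots,P_k},\mathcal O^n)$ lie in $q^{-n}\mathbb Z\subset \mathbb Q$. Collecting terms, there exist $A,B\in\mathbb Q[T]$ such that
\begin{equation*}
\int_D|f(x)|^s|dx|=A(q^{-s})+B(q^{-s})\,\dfrac{(1-q^{-1})q^{-s}}{1-q^{-1-s}}.
\end{equation*}
Placing the right-hand side over the common denominator $1-q^{-1-s}$ yields
\begin{equation*}
\int_D|f(x)|^s|dx|=\dfrac{(1-q^{-1-s})A(q^{-s})+(1-q^{-1})q^{-s}B(q^{-s})}{1-q^{-1-s}},
\end{equation*}
which is of the desired shape with $P(T):=(1-q^{-1}T)A(T)+(1-q^{-1})T\,B(T)\in\mathbb Q[T]$.

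At this stage there is no genuine obstacle: the substantive work is hidden in Lemma~\ref{ZG2}, which caps the recursion depth, and in the derivation of \eqref{expansion}, which already sets aside the denominator $1-q^{-1-s}$. The only care needed is bookkeeping: checking that no additional denominator is introduced at any level of the recursion (so the only pole is at $q^{-1-s}=1$) and that each exponent of $q^{-s}$ stays a non-negative integer, so that the numerator is an honest polynomial in $q^{-s}$ rather than a Laurent polynomial. Both points are immediate from the construction of the $E_{P_1,\dots,P_k}(f)$.
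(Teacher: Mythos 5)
Your proposal is correct and follows essentially the same route the paper takes: the paper derives the expansion~\eqref{expansion} by iterating the stationary phase formula, uses Lemma~\ref{ZG2}(ii) to guarantee $m(f,D)<\infty$, and then reads Lemma~\ref{first-important} off that finite expansion exactly as you do. Your bookkeeping (finiteness of each $I_k$, non-negativity and integrality of the exponents $E_{P_1,\dots,P_k}(f)$, rationality of the coefficients) simply makes explicit what the paper leaves implicit in the phrase ``This formula gives us the following lemma.''
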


By some slight modifications, we obtain a similar version of \cite[Lemma 2.5]{Zuniga-Galindo2}.

\begin{proposition}\label{ezlemma}
Let $\overline{D} \subseteq \mathbb F_q^n$ and $f\in \mathcal{O}[x]$. Assume that $f$ has no singular points in $D$. Let $F(x) = f(x) + \pi^\beta g(x)$, where $g \in \mathcal O[x]$, and $\beta > C(f,D) + m(f,D)$. Then
\begin{equation*}
\int_D |f(x)|^s|dx| = \int_D |F(x)|^s |dx|.
\end{equation*}
\end{proposition}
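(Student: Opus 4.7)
The plan is to induct on $m = m(f,D)$, applying Igusa's stationary phase formula (SPF) once to both $f$ and $F$ on $D$ and matching the resulting expansions term by term. Since $\beta > C(f,D) \ge 0$ forces $\beta \ge 1$, we always have $\bar F = \bar f$ in $\mathbb{F}_q[x]$, so that $\nu(\bar F,D) = \nu(\bar f,D)$, $\sigma(\bar F,D) = \sigma(\bar f,D)$, and $S(F,D) = S(f,D)$. The base case $m = 0$ is then immediate: $S(f,D) = \varnothing$ makes the SPF expansion of each polynomial collapse to its first two terms, which coincide.

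For the inductive step, apply SPF once to each of $f$ and $F$. The first two terms and the indexing set $S(\cdot,D)$ already match. For each $P \in S(f,D)$, singularity of $\bar P$ yields $e_P(f) \ge 1$, and the standard estimate $e_P(f) \le L(f,P)+1 \le C(f,D)+1 < \beta$ gives $e_P(F) = e_P(f)$ together with
\[
F_P(x) = f_P(x) + \pi^{\beta_1} g_1(x), \qquad \beta_1 := \beta - e_P(f) \ge 1, \qquad g_1(x) := g(P+\pi x) \in \mathcal{O}[x].
\]
To apply the inductive hypothesis to $(f_P, \mathcal{O}^n, F_P, \beta_1)$, I must verify: (a) $f_P$ has no singular points in $\mathcal{O}^n$, which holds because any such singular point $Q$ would produce the singular point $P + \pi Q \in D$ of $f$; (b) $m(f_P, \mathcal{O}^n) \le m - 1$, which follows from the injection $I_k(f_P, \mathcal{O}^n) \hookrightarrow I_{k+1}(f,D)$ obtained by prepending $P$; and (c) $\beta_1 > C(f_P, \mathcal{O}^n) + m(f_P, \mathcal{O}^n)$. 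For (c), a direct computation of $L(f_P, Q)$ via the substitution $y = P + \pi Q$ yields $C(f_P, \mathcal{O}^n) \le C(f,D) + 1 - e_P(f)$; combined with (b), this gives $C(f_P, \mathcal{O}^n) + m(f_P, \mathcal{O}^n) \le C(f,D) + m - e_P(f) < \beta_1$. The inductive hypothesis then equates the two inner integrals $\int_{\mathcal{O}^n}|f_P|^s|dx|$ and $\int_{\mathcal{O}^n}|F_P|^s|dx|$, and summing over $P \in S(f,D)$ closes the induction.

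The main technical point is the asymmetric valuation shift appearing in (c): dividing $f(P+\pi x)$ by $\pi^{e_P(f)}$ costs each $x_i$-coefficient only $e_P(f)-1$ units of valuation, rather than $e_P(f)$ units as for the constant term, and this is exactly what produces the ``$+1$'' in the bound $C(f_P, \mathcal{O}^n) \le C(f,D) + 1 - e_P(f)$. This extra unit is balanced precisely by the drop $m(f_P, \mathcal{O}^n) \le m - 1$ in (b), explaining why the threshold $\beta > C(f,D) + m(f,D)$ is both sharp and sufficient for the induction to close.
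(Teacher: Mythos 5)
Your proof is correct and uses the same underlying mechanism as the paper's: show that the stationary-phase data of $f$ and $F$ coincide at every level of the iteration. The difference is organizational. The paper applies the full $m$-fold expanded stationary phase formula (2.1) and verifies, for all $k\le m$, that $I_k(f,D)=I_k(F,D)$, $E_{P_1,\dots,P_k}(f)=E_{P_1,\dots,P_k}(F)$ and $\overline f_{P_1,\dots,P_k}=\overline F_{P_1,\dots,P_k}$, the key input being the cumulative bound $E_{P_1,\dots,P_k}(f)\le L(f,P_1+\pi P_2+\dots+\pi^{k-1}P_k)+k\le C(f,D)+m(f,D)<\beta$. You instead apply the stationary phase formula once and induct on $m(f,D)$, using the one-step bound $e_P(f)\le L(f,P)+1\le C(f,D)+1<\beta$ to get $e_P(F)=e_P(f)$ and $F_P=f_P+\pi^{\beta_1}g_1$, then verifying $C(f_P,\mathcal O^n)\le C(f,D)+1-e_P(f)$ and $m(f_P,\mathcal O^n)\le m(f,D)-1$ so that the inductive hypothesis applies with $\beta_1=\beta-e_P(f)$. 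Your reformulation buys two things: it avoids writing out the full expansion (2.1), and the explicit bookkeeping of how $C$ and $m$ drop under $f\mapsto f_P$ makes transparent why $\beta>C(f,D)+m(f,D)$ is precisely the right threshold; the mathematical content, however, is the same as the paper's.
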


\begin{proof}
It suffices to prove the following statements:
\begin{itemize}
\item[(i)] $I_k(f,D) = I_k(F,D)$;
\item[(ii)] $E_{P_1,\dots,P_k}(f) = E_{P_1,\dots,P_k}(F)$ for all $(P_1,\dots,P_k) \in I_k(f,D) = I_k(F,D)$;
\item[(iii)] $\overline{f}_{P_1,\dots,P_k}(x) = \overline{F}_{P_1,\dots,P_k}(x)$ for all $(P_1,\dots,P_k) \in I_k(f,D) = I_k(F,D)$.
\end{itemize}
The item (i) can be proved easily by induction on $k\geq 1$. Now, taking any point $(P_1,\dots,P_k)$ in $I_{k}(f,D) = I_{k}(F,D)$ we have 
\begin{align*}
	\pi^{E_{P_1,\dots,P_k}(F)}F_{P_1,\dots,P_k}(x) &= F(P_1 + \pi P_2 + \dots + \pi^{k-1}P_k + \pi^kx)\\
	&= f(P_1 + \pi P_2 + \dots + \pi^{k-1}P_k + \pi^kx) + \pi^\beta g_k(x)\\
	&= \pi^{E_{P_1,\dots,P_k}(f)}f_{P_1,\dots,P_k}(x) + \pi^\beta g_k(x),
\end{align*}
where 
$$g_k(x) = g(P_1 + \pi P_2 + \dots + \pi^{k-1}P_k + \pi^kx).$$ 
Since $\pi$ does not divide the least common divisor of all the coefficients of $f_{P_1,\dots,P_k}(x)$, the number $E_{P_1,\dots,P_k}(f)$ is the minimum $\pi$-order of all the coefficients of $ f(P_1 + \pi P_2 + \dots + \pi^{k-1}P_k + \pi^kx)$. Taking the Taylor expansion of the latter at $Q = P_1 + \pi P_2 + \dots + \pi^{k-1}P_k \in D$ gives
\begin{align*}
	f(P_1 + \pi P_2 + \dots + \pi^{k-1}P_k + \pi^kx)= f(Q) + \sum_{i = 1}^n\dfrac{\partial f}{\partial x_i}(Q)\pi^kx_i + (\text{higher terms}).
\end{align*}
It implies that
\begin{align*}
	E_{P_1,\dots,P_k}(f) &\le \min\left\{v\left(f(Q)\right),v\left(\dfrac{\partial f}{\partial x_1}(Q)\pi^k\right),\dots,v\left(\dfrac{\partial f}{\partial x_n}(Q)\pi^k\right)\right\}\\
	&\le L(f,Q) + k \\
	&\le C(f,D) + m(f,D)\\
	& < \beta.
\end{align*}
If  $E_{P_1,\dots,P_k}(F) < E_{P_1,\dots,P_k}(f) < \beta$, then
\begin{equation*}
	F_{P_1,\dots,P_k}(x) = \pi^{E_{P_1,\dots,P_k}(f) - E_{P_1,\dots,P_k}(F)}f_{P_1,\dots,P_k}(x) + \pi^{\beta - E_{P_1,\dots,P_k}(F)}  g_k(x),
\end{equation*}
which is a contradiction as the greatest common divisor of the coefficients of $F_{P_1,\dots,P_k}$ is also not a multiple of $\pi$. Thus $E_{P_1,\dots,P_k}(F) \ge E_{P_1,\dots,P_k}(f)$. The same argument give the reverse inequality, which proves that 
$$E_{P_1,\dots,P_k}(F) = E_{P_1,\dots,P_k}(f).$$ 
It implies that 
\begin{equation*}
	F_{P_1,\dots,P_k}(x) = f_{P_1,\dots,P_k}(x) + \pi^{\beta - E_{P_1,\dots,P_k}(f)}  g_k(x),
\end{equation*}
hence
\begin{equation*}
\overline{f}_{P_1,\dots,P_k}(x) = \overline{F}_{P_1,\dots,P_k}(x).
\end{equation*}
Since the previous equality holds for all $(P_1,\dots,P_m) \in I_m(f,D)$, where $m=m(f,D)$, we get
$$S(\overline{F}_{P_1,\dots,P_{m}}, \mathcal O^n) =S(\overline{f}_{P_1,\dots,P_{m}}, \mathcal O^n) = \varnothing.$$ 
Therefore, $m(F,D) = m(f,D)$, the lemma then follows from the expansion (\ref{expansion}).
\end{proof}

\begin{corollary}\label{second-important}
Let $\overline{D} \subseteq \mathbb F_q^n$ and $f\in \mathcal{O}[x]$. Assume that $f$ has no critical points in $D$. Let $F(x) = f(x) + \pi^\beta g(x)$, where $g(x) \in \mathcal O[x]$ and $\beta > 2c(f,D) + 1$. Then
\begin{equation*}
\int_D |f(x)|^s|dx| = \int_D |F(x)|^s |dx|.
\end{equation*}
\end{corollary}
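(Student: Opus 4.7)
The plan is to reduce Corollary \ref{second-important} to Proposition \ref{ezlemma}. First, observe that the hypothesis is strictly stronger: if $f$ has no critical points in $D$, then \emph{a fortiori} $f$ has no singular points in $D$, because a singular point (zero of $\overline f$ at which all partials of $\overline f$ vanish) must in particular be a point where all partials vanish, i.e.\ a critical point. Hence $C(f,D)$, $m(f,D)$ and the entire machinery behind Proposition \ref{ezlemma} are available.

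The only thing that remains is to verify the quantitative hypothesis of Proposition \ref{ezlemma}, namely $\beta > C(f,D) + m(f,D)$, starting from the given bound $\beta > 2c(f,D) + 1$. For this I would compare $C(f,D)$ and $c(f,D)$ directly from the definitions: since
\begin{equation*}
L(f,P) = \min\bigl\{v(f(P)),\, \ell(f,P)\bigr\} \le \ell(f,P)
\end{equation*}
for every $P \in D$, taking the supremum over $D$ yields $C(f,D) \le c(f,D)$. Next, by Lemma \ref{ZG2}(ii) applied to $f$ (which, as noted, has no singular points in $D$), we have $I_k(f,D) = \varnothing$ whenever $k > C(f,D) + 1$, and therefore $m(f,D) \le C(f,D) + 1$.

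Combining these two inequalities gives
\begin{equation*}
C(f,D) + m(f,D) \le 2\,C(f,D) + 1 \le 2\,c(f,D) + 1 < \beta,
\end{equation*}
so the assumption of Proposition \ref{ezlemma} is satisfied. Applying that proposition to $F(x) = f(x) + \pi^\beta g(x)$ yields the desired equality of integrals.

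I do not anticipate a genuine obstacle here; the only subtlety is making sure that the two quantities $L$ and $\ell$, and correspondingly $C$ and $c$, are compared in the right direction, and that Lemma \ref{ZG2}(ii) is invoked with its hypothesis (no singular points) genuinely satisfied, which is precisely what the implication ``no critical points $\Rightarrow$ no singular points'' gives us.
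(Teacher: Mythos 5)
Your proof is correct and follows essentially the same route as the paper: the paper's own argument invokes Lemma \ref{ZG2}(ii) to get $m(f,D) \le C(f,D) + 1$, notes $C(f,D) \le c(f,D)$, and concludes via Proposition \ref{ezlemma}. Your write-up merely supplies the two small details the paper leaves implicit --- that absence of critical points forces absence of singular points, and that $L(f,P) = \min\{v(f(P)), \ell(f,P)\} \le \ell(f,P)$ gives $C(f,D) \le c(f,D)$ --- both of which are correct.
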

\begin{proof}
By Lemma \ref{ZG2} we get $m(f,D) \le C(f,D) + 1$. The corollary follows from Lemma \ref{ezlemma} and the fact that $ C(f,D)\le c(f,D)$.
\end{proof}

	
\section{Newton polyhedron}
\subsection{Newton non-critical polynomials}
In this section, we briefly recall some well-known results about the Newton polyhedron of a polynomial (see e.g. \cite{Denef-Hoornaert}). Let $f(x) = \sum_{\omega \in \mathbb N^n}a_\omega x^\omega$ be a non zero polynomial over $\mathcal O$ with $f(0) = 0$, where $x^\omega$ stands for $x_1^{\omega_1}\cdots x_n^{\omega_n}$ with $x=(x_1,\dots,x_n)$ and $\omega=(\omega_1,\dots,\omega_n)$. We denote by $\mathbb R^+$ the set of non-negative real numbers. The {\it Newton polyhedron} $\Gamma_f$ of $f$ is the convex hull in $(\mathbb R^+)^n$ of the set
\begin{equation*}
	\bigcup_{\omega \in \mathrm{supp}(f)} \omega + (\mathbb R^+)^n,
\end{equation*}
where $\mathrm{supp}(f) = \{\omega \in \mathbb N^n: a_\omega \ne 0\}$. Each proper face of $\Gamma_f$ is the intersection of $\Gamma_f$ with a supporting hyperplane, and a face of $\Gamma_f$ is a proper face of $\Gamma_f$ or the Newton polyhedron itself. For $a \in (\mathbb R^+)^n$, we define
\begin{equation*}
	m_f(a) = \inf_{x \in \Gamma_f} \{a \cdot x\},
\end{equation*}
and the first meet locus of $a$ is
\begin{equation*}
	F_f(a) = \{x \in \Gamma_f \mid a \cdot x = m_f(a)\}.
\end{equation*}

\begin{definition}
	For each face $\tau$ of $\Gamma_f$, the cone associated to $\tau$ is
	\begin{equation*}
		\Delta_{f,\tau} = \{ a \in (\mathbb R^+)^n\mid F_f(a) = \tau\}.
	\end{equation*}
\end{definition}
From now on, if there is no possibility of confusion, we drop the subscript $f$ and simply write $m(a),F(a),$ and $\Delta_\tau$.

A vector $a \in \mathbb N^n$ is called {\it primitive} if the greatest common divisor of its components is $1$. A {\it facet} of $\Gamma_f$ is a proper face of codimension one. We can prove that for every facet of $\Gamma$, there exists a unique primitive vector in $\mathbb N^n \setminus \{0\}$ perpendicular to that facet. Every proper face $\tau$ of $\Gamma_f$ is a finite intersection of the facets of $\Gamma_f$ containing $\tau$.

\begin{lemma}[Denef-Hoornaert \cite{Denef-Hoornaert}]
	Let $\tau$ be a proper face of $\Gamma_f$, and let $\gamma_1,\dots,\gamma_e$ be the facets of $\Gamma_f$ which contain $\tau$. Let $a_1,\dots,a_e$ be the unique primitive vectors in $\mathbb N^n \setminus \{0\}$ which are perpendicular to respectively $\gamma_1,\dots,\gamma_e$. The the cone $\Delta_\tau$ associated to $\tau$ is the following convex cone
	\begin{equation*}
		\Delta_\tau = \{\lambda_1 a_1 + \dots + \lambda_e a_e\mid  \lambda_i \in \mathbb R,\lambda_i >0\}
	\end{equation*}
and its dimension is equal to $n - \dim\tau$. Moreover, we also have
\begin{equation*}
	\overline{\Delta}_\tau = \{a \in (\mathbb R^+)^n\mid  \tau \subseteq F(a)\} = \{\lambda_1 a_1 + \dots + \lambda_e a_e\mid  \lambda_i \in \mathbb R,\lambda_i \ge0\}.
\end{equation*}
\end{lemma}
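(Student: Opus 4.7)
The plan is to establish the three claims (the open cone description, the dimension formula, and the closed cone identity) by first pinning down $\overline{\Delta}_\tau$ combinatorially, then extracting $\Delta_\tau$ as its relative interior. First I would observe that for any $a \in (\mathbb{R}^+)^n$ the first meet locus $F(a)$ is itself a face of $\Gamma_f$ (the minimizer of the linear functional $x \mapsto a \cdot x$ on $\Gamma_f$), and that the faces of $\Gamma_f$ containing $\tau$ are exactly those of the form $F(a)$ for $a$ in some $\Delta_\sigma$ with $\sigma \supseteq \tau$. Hence the set on the right-hand side decomposes as the disjoint union $\bigsqcup_{\sigma \supseteq \tau}\Delta_\sigma$. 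A perturbation argument, namely adding a small positive multiple of any $a^\circ$ lying in $\Delta_\tau$ to a generic point of $\Delta_\sigma$, would show $\Delta_\sigma \subseteq \overline{\Delta}_\tau$ for each $\sigma \supseteq \tau$, yielding the equality $\overline{\Delta}_\tau = \{a \in (\mathbb{R}^+)^n \mid \tau \subseteq F(a)\}$.

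Next I would identify this closed set with $C := \{\lambda_1 a_1 + \dots + \lambda_e a_e \mid \lambda_i \ge 0\}$. The inclusion $C \subseteq \overline{\Delta}_\tau$ is direct: if $a = \sum_i \lambda_i a_i$ with $\lambda_i \ge 0$, then for every $x \in \tau$ one has $a_i \cdot x = m(a_i)$ because $\tau \subseteq \gamma_i = F(a_i)$, so $a \cdot x = \sum_i \lambda_i m(a_i)$ is constant on $\tau$; convexity of $\Gamma_f$ together with the fact that each $a_i \cdot y \ge m(a_i)$ on $\Gamma_f$ then forces this constant value to equal $m(a)$, so $\tau \subseteq F(a)$. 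The reverse inclusion is the main obstacle and is the content of the normal-fan theory of polyhedra: given $a$ with $\tau \subseteq F(a)$, I would apply Farkas' lemma (equivalently, the separating hyperplane theorem) to the polyhedral cone obtained by translating $\Gamma_f$ so that a relative interior point of $\tau$ sits at the origin; locally near $\tau$, $\Gamma_f$ is cut out by the facet inequalities coming from $\gamma_1, \dots, \gamma_e$, and the dual characterization of the normal cone yields $a \in C$.

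For the dimension and the open cone description, I would use that $\tau = \gamma_1 \cap \cdots \cap \gamma_e$ inside $\Gamma_f$ implies that the affine hull of $\tau$ is the intersection of the supporting hyperplanes $\{x \mid a_i \cdot x = m(a_i)\}$. Its codimension in $\mathbb{R}^n$ therefore equals $\dim_\mathbb{R} \mathrm{span}(a_1, \dots, a_e)$, giving $n - \dim \tau = \dim C = \dim \overline{\Delta}_\tau$. Finally, $\Delta_\tau$ is recovered as the set of $a = \sum_i \lambda_i a_i$ with all $\lambda_i > 0$: for such $a$ one checks, again using $\tau \subseteq F(a)$ on one hand and strict positivity of each $\lambda_i$ preventing $F(a)$ from extending to a strictly larger face $\sigma \supsetneq \tau$ (because some facet $\gamma_i \supseteq \tau$ would then fail to contain $\sigma$, contradicting $a \cdot x = m(a)$ on $\sigma$), that $F(a) = \tau$ exactly; conversely, if some $\lambda_i = 0$, the proof shows $F(a) \supsetneq \tau$. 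The delicate point throughout is the $\supseteq$ direction of the normal-cone identification, which is where Farkas (or equivalently the biduality of polyhedral cones) does the real work.
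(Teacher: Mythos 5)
The paper itself does not prove this lemma; it is quoted as a known result from Denef--Hoornaert \cite{Denef-Hoornaert}, so there is no internal proof to compare against. Taken on its own terms, your outline follows the standard normal-fan argument and most of it is sound: the decomposition $\{a : \tau \subseteq F(a)\} = \bigsqcup_{\sigma \supseteq \tau}\Delta_\sigma$, the perturbation argument identifying this set with $\overline{\Delta}_\tau$, the direct inclusion $C := \{\sum_i \lambda_i a_i : \lambda_i \ge 0\} \subseteq \overline{\Delta}_\tau$, the Farkas/normal-cone argument for the reverse inclusion, and the dimension count via $\mathrm{aff}(\tau) = \bigcap_i\{x : a_i\cdot x = m(a_i)\}$.

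The one step that is stated imprecisely is the extraction of $\Delta_\tau$ as the strictly positive span. You assert that if some $\lambda_i = 0$ then $F(a) \supsetneq \tau$, but when $e > n - \dim\tau$ the vectors $a_1,\dots,a_e$ are linearly dependent, so a given $a \in C$ admits many representations $\sum_i\lambda_i a_i$; one representation having a vanishing coefficient does not by itself place $a$ on the relative boundary of $C$. A clean way to close this is: (i) check that no $a_i$ lies in the cone generated by the remaining $a_j$ --- otherwise the same equality-case argument you used for $C \subseteq \overline{\Delta}_\tau$ would force $\gamma_i$ to coincide with an intersection of at least two distinct facets, which has codimension $\ge 2$, whereas $\gamma_i$ is a facet --- so $a_1,\dots,a_e$ are exactly the extreme rays of $C$; (ii) use that for a polyhedral cone expressed as the nonnegative span of its extreme-ray generators, the strictly positive combinations are precisely the relative interior; (iii) conclude $\Delta_\tau = \mathrm{relint}\,\overline{\Delta}_\tau$, since $C \setminus \Delta_\tau = \bigsqcup_{\sigma \supsetneq \tau}\Delta_\sigma$ has dimension strictly less than $\dim C$ once the first part of the lemma is known for each $\sigma \supsetneq \tau$. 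Phrasing the last step through relative interiors rather than through a particular coefficient vector removes the gap; the overall strategy is otherwise correct and is the standard one.
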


\begin{definition}
	We call $\Delta \subseteq \mathbb R^n$ a {\it strictly positively spanned cone} by vectors $a_1,\dots,a_e$ in $\mathbb R^n \setminus \{0\}$ if
	\begin{equation*}
		\Delta = \{\lambda_1a_1 + \dots + \lambda_ea_e \mid \lambda_i \in \mathbb R, \lambda_i>0\}.
	\end{equation*}
	If $a_1,\dots,a_e$ are linearly independent over $\mathbb R$, $\Delta$ is called a {\it simplicial cone}. If moreover, $a_1,\dots,a_e$ are  in $\mathbb Z^n$, we say $\Delta$ is a {\it rational} simplicial cone.  Furthermore, if $a_1,\dots,a_e$ can be completed to be a basis of $\mathbb Z^n$, we call $\Delta$ a simple cone.
\end{definition}

\begin{lemma}[Denef-Hoornaert \cite{Denef-Hoornaert}]
	Let $\Delta \subseteq \mathbb R^n$ be the strictly positively spanned cone by vectors $a_1,\dots,a_e$ in $\mathbb R^n \setminus \{0\}$. Then there exists a finite partition of $\Delta$ into cones $\Delta_i$, such that each $\Delta_i$ is strictly positively spanned by some vectors from the set $\{a_1,\dots,a_e\}$ which are linearly independent over $\mathbb R$.
\end{lemma}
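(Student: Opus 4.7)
The plan is to prove this by induction on the number of generators $e$, aiming to reduce to the case where $a_1,\dots,a_e$ are already linearly independent. In that base case (which always holds when $e = 1$), the singleton partition $\{\Delta\}$ works and the conclusion is immediate.

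For the inductive step, suppose $a_1,\dots,a_e$ are linearly dependent, and fix a non-trivial relation $\sum_{i=1}^e c_i a_i = 0$. Let $I^+ = \{i : c_i > 0\}$ and $I^- = \{i : c_i < 0\}$; both must be non-empty, since otherwise a combination with all $c_i \ge 0$ not identically zero would yield $0 \in \Delta$, contradicting strict positive spanning (which forces $\Delta$ to be pointed). For any $x = \sum_{i} \lambda_i a_i$ with $\lambda_i > 0$, the family of coefficients $\lambda_i + t c_i$ parametrized by $t \in \mathbb R$ still represents $x$; as $t$ increases from $0$ some coefficient indexed by $I^-$ drops to zero first, and as $t$ decreases some coefficient indexed by $I^+$ drops to zero first. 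Hence every $x \in \Delta$ admits a positive representation using at most $e - 1$ of the $a_i$, yielding a finite cover
$$\Delta = \bigcup_{i \in I^+ \cup I^-} \Delta_{\hat\imath}, \qquad \Delta_{\hat\imath} := \Big\{\sum_{j \ne i} \mu_j a_j : \mu_j > 0\Big\}.$$
Each $\Delta_{\hat\imath}$ is strictly positively spanned by $e-1$ vectors from $\{a_1,\dots,a_e\}$, so by the induction hypothesis it admits a finite partition into cones strictly positively spanned by linearly independent subsets of $\{a_j : j \ne i\}$.

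The main obstacle is converting these overlapping sub-partitions into a single disjoint partition of $\Delta$. I would resolve this by constructing a common simplicial refinement: the intersection of two cones strictly positively spanned by linearly independent subsets of $\{a_1,\dots,a_e\}$ is itself a strictly positively spanned cone on a subset of their common generators (possibly requiring further subdivision via the same dependence trick), so iterating finitely many times produces the desired partition. A more conceptual alternative is to triangulate the polytope $\mathrm{conv}(a_1,\dots,a_e)$ using only its own vertices---a standard combinatorial fact about polytopes---and form the open cones from the origin over the relative interiors of the simplices of this triangulation; since pointedness gives $0 \notin \mathrm{conv}(a_1,\dots,a_e)$, affine independence of simplex vertices upgrades to linear independence, and the disjointness of the relatively open simplices transfers directly to the corresponding cones.
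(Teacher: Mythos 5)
The paper does not prove this lemma --- it is quoted from Denef--Hoornaert --- so there is no in-paper argument to compare against; I evaluate your sketch on its own terms.

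Your route (b), coning over a triangulation, is the standard one and is correct in spirit, but it has a real gap: the assertion that $0\notin\mathrm{conv}(a_1,\dots,a_e)$ promotes affine independence of simplex vertices to linear independence is false. Take $a_1=(1,0)$, $a_2=(2,0)$, $a_3=(0,1)$ in $(\mathbb R^+)^2$: the cone is pointed and $0\notin\mathrm{conv}(a_1,a_2,a_3)$, all three $a_i$ are vertices of their convex hull and are affinely independent, yet $a_1,a_2$ are linearly dependent. Consequently disjointness of the open simplices does \emph{not} transfer to the cones: the open cones from the origin over the open edge $\mathrm{conv}(a_1,a_3)$, over the open edge $\mathrm{conv}(a_2,a_3)$, and over the open triangle all coincide with the open first quadrant. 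What pointedness gives is a linear form $\ell$ with $\ell(a_i)>0$, hence $0\notin\mathrm{conv}$; it does not give $0\notin\mathrm{aff}(a_1,\dots,a_e)$, which is what the upgrade to linear independence actually requires. The fix is to rescale before triangulating: set $b_i=a_i/\ell(a_i)$ so that all $b_i$ lie in the affine hyperplane $H=\{\ell=1\}$ and $P:=\mathrm{conv}(b_1,\dots,b_e)=\overline{\Delta}\cap H$ is a bounded cross-section. Affinely independent points of $H$ \emph{are} linearly independent (apply $\ell$ to a linear relation to see its coefficients sum to zero). Triangulating $P$ with vertices among the $b_i$ and taking cones from the origin over the relatively open faces of all dimensions now genuinely partitions $\overline{\Delta}\setminus\{0\}$ into cones strictly positively spanned by linearly independent subsets of $\{b_i\}$, hence of $\{a_i\}$ since each $b_i$ is a positive multiple of $a_i$; intersecting with $\Delta=\mathrm{relint}(\overline{\Delta})$ finishes. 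Your alternative (a) has a separate problem --- the intersection of two simplicial cones on subsets of $\{a_i\}$ is in general not of that form, e.g.\ with $a_1,\dots,a_5$ the standard basis of $\mathbb R^3$ together with $(1,1,0)$ and $(0,1,1)$, the relative interiors of $\mathrm{cone}(a_1,a_2,a_5)$ and $\mathrm{cone}(a_2,a_3,a_4)$ meet in a $3$-dimensional cone with four extreme rays, one spanned by $(1,1,1)$ --- so that route would need to be reworked and essentially reduces to the triangulation argument. Finally, note that the pointedness you invoke is not in the lemma's wording but does hold in the paper's applications, where $\Delta\subseteq(\mathbb R^+)^n$; the lemma can fail without it.
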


\begin{definition}
Let $f(x)$ be a polynomial in $\mathcal O[x]$. Then $f$ is called {\it Newton non-critical} if for every face $\tau$ of $\Gamma_f$, the following system
\begin{equation*}
\dfrac{\partial f_\tau}{\partial x_i}(x) = 0, \hspace{10pt}\text{for }1\leq i\leq n,
\end{equation*}
has no solution in $(K^\times)^n$.
\end{definition}


\subsection{A technical lemma}\label{Sec32}
Given $c,d$ be positive integers, we introduce the following map
\begin{equation*}
	\phi_{c,d}: \mathbb Z_{>0}^2 \to \mathbb Z_{>0}^2
\end{equation*}
defined by
\begin{equation*}
		\phi_{c,d}(s,t) = \begin{cases}
		(c,d)&\text{if}\ s = t\\
		(s-t,d)&\text{if}\ s>t\\
		(c, t - s)&\text{if}\ s<t.
\end{cases}
\end{equation*}
We put
\begin{align*}
(c_1,d_1) &= (c,d),\\
(c_{k+1},d_{k+1}) &= \phi_{c,d}(c_k,d_k), \ k \ge 1.
\end{align*}
Hosokawa in \cite{Hiroshi-Hosokawa} proved that the sequences $(c_k,d_k)_{k\ge 1}$ is periodic. In order to keep this paper self-contained, let us sketch his proof as follows. We may assume $c \le d$. Write $\phi$ for $\phi_{c,d}$, and put $c = ke, d = ke'$, where $k = \gcd(c,d)$. Write $e' = qe + r$, where $ 0\le r\le e - 1$. If $r = 0$, then $e = 1$ as $\gcd(e,e') = 1$. In this case, $(c_k,d_k)_{k\geq 1}$ is clearly periodic with period $q$. In the case where $r > 0$, by dividing the closed interval $[0,er]$ into $r$ sub-intervals $I_i := [e(i - 1),ei]$, $1\leq i\leq r$, we put
\begin{equation*}
	\Lambda_0 = \{\lambda\in [0,e-1]\cap\mathbb Z\mid \lambda r \text{ and } (\lambda + 1) r \text{ are not in the same } I_i\}.
\end{equation*}
As $\gcd(e,r) = \gcd(e,e') = 1$, each $\lambda r$ must lies in the interior of some $I_i$, so $|\Lambda_0| = r - 1$. For each integer $n$, we denote by $\overline{n}$ the remainder of $n$ modulo $e$. Note that $(c_1,d_1) = (ke,ke') = (ke, k(qe +r))$, and it takes us $q$ steps to go from $(ke,k(qe+r))$ to $(ke,kr) = (ke, k\bar{r})$ along $\phi$ as follows
\begin{equation*}
	(ke,k(qe + r)) \longmapsto (ke, k(q-1)e + kr)\longmapsto \dots \longmapsto (ke, kr) = (ke, k\overline{r}).
\end{equation*}
For $1\leq \lambda\leq e-2$, we consider the following cases.
\begin{itemize}
	\item If $\lambda \notin \Lambda_0$, then $\overline{(\lambda + 1)r} = r + \overline{\lambda r}$, thus it takes us $(q + 1)$ steps to go from $(ke,k\overline{\lambda r})$ to $(ke, ke\overline{(\lambda + 1)r}$ along $\phi$ as follows
	\begin{align*}
		(ke,k\overline{\lambda r})\longmapsto (k(e - \overline{\lambda r}),ke')\longmapsto & (ke, k(q-1)e + k(r + \overline{\lambda r}))\longmapsto\\
		& \cdots \longmapsto (ke, k(r + \overline{\lambda r})) = (ke,k\overline{(\lambda + 1)r}).
	\end{align*}
	\item If $\lambda \in \Lambda_0$, then $\overline{(\lambda + 1)r} = r + \overline{\lambda r} - e$, thus we need an extra step to go from $(ke,k\overline{\lambda r})$ to $(ke, ke\overline{(\lambda + 1)r}$ along $\phi$ as follows
	\begin{align*}
		(ke,k\overline{\lambda r})\longmapsto &(k(e - \overline{\lambda r}),ke')\longmapsto \cdots\\
		 &\longmapsto (ke, k(r + \overline{\lambda r})) \longmapsto (ke,k(r + \overline{\lambda r} - e )) = (ke,k\overline{(\lambda + 1)r}).
	\end{align*}
\end{itemize}
From $(ke,k\overline{(e-1)r}) = (ke, k(e-r))$ to $(ke, ke')$, it takes us $q + 2$ steps along $\phi$, namely,
\begin{equation*}
	(ke,k(e-r))\longmapsto(kr, ke')\longmapsto(ke,kqe)\longmapsto\cdots \longmapsto (ke,ke)\longmapsto (ke,ke').
\end{equation*}
In summary, going once from $(ke,ke')$ to itself needs the number of steps to be
\begin{equation*}
	q + (q + 1)(e - 2 - |\Lambda_0|) + (q + 2)|\Lambda_0| + q + 2 = e + e' - 1=:p.
\end{equation*}
This number also covers the case $r=0$. So, for any $0\leq r\leq e-1$, we have $(c_1,d_1)=(c_{1 + p},d_{1 + p})$.

Furthermore, the previous arguments have proved the following technical lemma.

\begin{lemma}\label{technicallemma}
	Let $c,d$ be positive integers and put $e = c/\gcd(c,d), e' = d/\gcd(c,d)$. Then $(c_k,d_k)_{k \ge 1}$ is periodic with period $p:= e + e' -1$. Furthermore, if $(\mu_k,\nu_k)_{k\ge 2}$ is defined as
	\begin{equation*}
		(\mu_k,\nu_k) = \begin{cases}
			(\min\{c_k,d_k\}, \tilde{c} + \tilde{d})& \text{if}\ c_k = d_k,\\
			(\min\{c_k,d_k\}, \tilde{d})& \text{if}\ c_k > d_k,\\
			(\min\{c_k,d_k\}, \tilde{c})& \text{if}\ c_k < d_k,
		\end{cases}
	\end{equation*}
where $\tilde{c},\tilde{d}$ be arbitrary integers, then 
\begin{equation*}
	\sum_{k=2}^{p+1}\mu_k = \mathrm{lcm}(c,d) \quad\text{and}\quad \sum_{k=2}^{p+1} \nu_k = e\tilde{d} + e'\tilde{c}.
\end{equation*}
\end{lemma}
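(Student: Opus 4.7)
The plan is to exploit the explicit orbit structure used in the preceding paragraph: for $r > 0$, partition one period into a \emph{first descent} of $q$ steps from $(ke, ke')$ to $(ke, kr)$, the sub-cycles indexed by $\lambda = 1, \ldots, e-2$ (of $q+1$ or $q+2$ steps according to whether $\lambda \in \Lambda_0$) from $(ke, k\overline{\lambda r})$ to $(ke, k\overline{(\lambda+1)r})$, and a \emph{final ascent} of $q+2$ steps from $(ke, k(e-r))$ back to $(ke, ke')$. That these pieces total $p = e + e' - 1$ steps is exactly the periodicity claim, already verified in the step count just above. What remains is to evaluate the two sums piece by piece.

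For $\sum_{k=2}^{p+1}\mu_k$, in the sub-chain for $\lambda \in \{1, \ldots, e-2\}$ the intermediate states of the form $(ke, k(je + r + \overline{\lambda r}))$ with $j \ge 1$ each have second coordinate exceeding $ke$ and so contribute $ke$, while the initial state $(k(e-\overline{\lambda r}), ke')$ contributes $k(e-\overline{\lambda r})$ and the terminal state contributes $k\overline{(\lambda+1)r}$; when $\lambda \in \Lambda_0$ there is one extra intermediate $(ke, k(r+\overline{\lambda r}))$ also contributing $ke$. A short calculation collapses each sub-chain contribution to $k(qe+r) = ke'$, regardless of whether $\lambda \in \Lambda_0$. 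A similar direct count gives $ke' - ke$ for the first descent and $ke' + ke$ for the final ascent, so the grand total is $(e-2)ke' + (ke' - ke) + (ke' + ke) = e \cdot ke' = kee' = \mathrm{lcm}(c,d)$.

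For $\sum_{k=2}^{p+1}\nu_k$, it suffices to count the numbers $A$, $B$, $C$ of states in one period satisfying $c_k < d_k$, $c_k > d_k$, $c_k = d_k$ respectively, since the sum equals $(A + C)\tilde{c} + (B + C)\tilde{d}$. The only state with $c_k = d_k$ reached in the orbit is $(ke,ke)$, visited exactly once per period, so $C = 1$. The states with $c_k > d_k$ are precisely the terminal state of the first descent and the terminal state of each sub-cycle for $\lambda \in \{1, \ldots, e-2\}$, giving $B = 1 + (e-2) = e - 1$; in particular no state of the final ascent contributes to $B$. Together with $A + B + C = p = e + e' - 1$ this forces $A + C = e'$ and $B + C = e$, which is exactly $\sum \nu_k = e'\tilde{c} + e\tilde{d}$.

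The main subtlety is the bookkeeping required to avoid double-counting the terminal state of one sub-chain with the initial state of the next; once each state in one period is attributed to a single sub-chain, the collapse of each $\mu$-contribution to $ke'$ is the combinatorial heart of the argument. The degenerate cases $r = 0$ (where $e = 1$ and only the first descent survives) and $q = 1$ (where the final ascent has no non-trivial intermediate $(ke, kje)$ states) both conform to the same formulas by direct verification, just as the step count $q + (q+1)(e - 2 - |\Lambda_0|) + (q+2)|\Lambda_0| + q + 2 = e + e' - 1$ does.
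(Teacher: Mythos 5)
Your proposal is correct and follows essentially the same route as the paper: it uses exactly the decomposition of one period into the first descent, the $\lambda$-indexed sub-chains, and the final ascent that the paper sets up in the paragraph preceding the lemma, and your attribution of each state to a unique sub-chain is the right way to avoid double-counting. The only difference is that the paper asserts the two sum formulas as consequences of the orbit analysis without writing the arithmetic, whereas you carry out the bookkeeping (collapsing each sub-chain's $\mu$-contribution to $ke'$ and counting the states with $c_k>d_k$, $c_k<d_k$, $c_k=d_k$), which is a welcome filling-in of detail rather than a different approach.
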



\section{The Thom-Sebastiani type sum of non-critical functions}

\subsection{Computations on face functions} For any $f(x_1,\dots,x_n) \in \mathcal O[x_1,\dots,x_n]$ and $A\subseteq \mathbb{R}^n$ we put
\begin{equation*}
E_A := \{(x_1,\dots,x_n) \in \mathcal{O}^n\mid (v(x_1),\dots,v(x_n)) \in A\},
\end{equation*}
and
\begin{equation*}
Z_A(f;s) = \int_{E_A} |f(x)|^s |dx|.
\end{equation*}
When $A=(\mathbb R^+)^n$, $E_A$ is nothing but $\mathcal O^n$. In this case, we write simply $Z(f;s)$ for $Z_{\mathcal O^n}(f;s)$.

Consider polynomials $f(x) \in \mathcal O[x]$ and $g(y) \in \mathcal{O}[y]$, with $x=(x_1,\dots,x_n)$, $y=(y_1,\dots,y_m)$. We also consider the following polynomial
\begin{equation*}
f \oplus g := f(x) + g(y) \in \mathcal O[x,y].
\end{equation*}
For any $a \in \mathbb{N}^n$ and $b \in \mathbb{R}^m$, the setting in the previous section provides us the integers $m_f(a)\geq 0$ and $m_g(b)\geq 0$. 

\begin{definition}
We define $1$-degree polynomials $c_{a,b}(s)$ in $s$ as follows
\begin{itemize}
	\item[(i)] If $m_f(a) > 0$ and $m_g(b)>0$, set 
	$$c_{a,b}(s) := -ls - e|b| - e'|a|,$$ 
	where 
	$$l = \mathrm{lcm}(m_f(a),m_g(b)), \ \ e = \frac{m_f(a)}{\mathrm{gcd}(m_f(a),m_g(b))},\ \ \text{and}\ \ e' = \frac{m_g(b)}{\mathrm{gcd}(m_f(a),m_g(b))}.$$
	\item[(ii)] If $m_f(a) = 0$ or $m_g(b) = 0$, set 
	$$c_{a,b}(s) := -\infty.$$
\end{itemize}
\end{definition}

\begin{proposition}\label{important-computation}
Let $f(x) \in \mathcal O[x]$ and $g(y) \in \mathcal O [y]$ be Newton non-critical polynomials, let $\tau$ and $\gamma$ be any faces of $\Gamma_f$ and $\Gamma_g$, respectively. Suppose that $\{a_1,\dots a_u\}$ and $\{b_1,\dots b_v\}$ are $\mathbb{R}$-linearly independent sets that satisfy the following inclusions 
\begin{align*}
\Delta_1:= \bigoplus_{i = 1}^u \mathbb{N} a_i \subseteq \overline{\Delta}_\tau, \quad\text{and}\quad \Delta_2 := \bigoplus_{j = 1}^v \mathbb{N} b_j \subseteq \overline{\Delta}_\gamma.
\end{align*}
Then 
\begin{equation*}
Z_{\Delta_1 \times \Delta_2}(f_\tau \oplus g_\gamma; s) = \dfrac{P(q^{-s})}{(1 - q^{-1-s})\prod_{i = 1}^u\prod_{j = 1}^v(1 - q^{c_{a_i,b_j}(s)})},
\end{equation*}
where $P$ is a rational polynomial in one variable.
\end{proposition}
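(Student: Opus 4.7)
My plan is to reduce $Z_{\Delta_1 \times \Delta_2}(f_\tau \oplus g_\gamma; s)$ to an explicit double lattice sum. Writing each $x \in E_{\Delta_1}$ as $x_k = \pi^{\omega_k(\alpha)} u_k$ with $u_k \in \mathcal{O}^{\times}$ and $\omega(\alpha) := \sum_i \alpha_i a_i$ (and similarly $y_l = \pi^{\rho_l(\beta)} w_l$ with $\rho(\beta) := \sum_j \beta_j b_j$), and using the quasi-homogeneity $f_\tau(\pi^{\omega_1(\alpha)} u_1, \dots, \pi^{\omega_n(\alpha)} u_n) = \pi^{A(\alpha)} f_\tau(u)$ with $A(\alpha) := \sum_i \alpha_i m_f(a_i)$ (valid since each $a_i \in \overline{\Delta}_\tau$), and likewise for $g_\gamma$, I would obtain
\begin{equation*}
Z_{\Delta_1 \times \Delta_2}(f_\tau \oplus g_\gamma; s) = \sum_{(\alpha, \beta) \in \mathbb{N}^u \times \mathbb{N}^v} q^{-|\omega(\alpha)| - |\rho(\beta)|}\, G(A(\alpha), B(\beta); s),
\end{equation*}
where $G(A, B; s) := \int_{(\mathcal{O}^{\times})^{n+m}} |\pi^A f_\tau(u) + \pi^B g_\gamma(w)|^s\, du\, dw$ and $|\omega(\alpha)| = \sum_i \alpha_i |a_i|$.

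Factoring out $q^{-\min(A, B) s}$ from $G(A, B; s)$ reduces the inner integral to the boundary values $G(N, 0; s)$ or $G(0, N; s)$ with $N = |A - B|$. The Newton non-criticality of $f$ and $g$ guarantees that $f_\tau$, $g_\gamma$, and $f_\tau \oplus g_\gamma$ have no critical points on the respective tori (the last because a critical point of the sum projects to critical points of each summand). Thus Corollary \ref{second-important} applies, yielding a fixed threshold $N_0$ beyond which these boundary integrals become constants depending only on $\int_{(\mathcal{O}^{\times})^n} |f_\tau(u)|^s\, du$ or $\int_{(\mathcal{O}^{\times})^m} |g_\gamma(w)|^s\, dw$; each such constant, together with $G(0, 0; s)$ and the finitely many transitional values $G(N, 0; s), G(0, N; s)$ for $1 \le N \le N_0$, is by Lemma \ref{first-important} a rational function in $q^{-s}$ with denominator $1 - q^{-1-s}$.

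I then split the double sum into three regions: (i) the diagonal $A(\alpha) = B(\beta)$; (ii) the two stable off-diagonal regions with $|A - B| > N_0$, on which the inner integral is constant; and (iii) the finitely many transitional strips $0 < |A - B| \le N_0$. Writing $c_i := m_f(a_i), d_j := m_g(b_j)$, the diagonal lattice $\mathcal{L}^+ := \{(\alpha, \beta) \in \mathbb{N}^u \times \mathbb{N}^v : A(\alpha) = B(\beta)\}$ is positively spanned by the $uv$ vectors $v_{ij}$ having $\alpha_i$-component $e_{ij}' = d_j/\gcd(c_i, d_j)$ and $\beta_j$-component $e_{ij} = c_i/\gcd(c_i, d_j)$ (other components zero); a direct computation shows that each generator $v_{ij}$ contributes exactly the weight $q^{c_{a_i, b_j}(s)}$ to the summand. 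Applying the Denef-Hoornaert partition lemma from Section 3.1 to $\mathcal{L}^+$ produces a finite decomposition into simplicial subcones, each generated by an $\mathbb{R}$-linearly independent subset of $\{v_{ij}\}$, and each yielding a geometric series whose denominator divides $\prod_{i, j}(1 - q^{c_{a_i, b_j}(s)})$. The stable off-diagonal contributions reduce similarly by subtracting near-diagonal strip sums from the full orthant sum.

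For the transitional strips, Lemma \ref{technicallemma} plays the decisive role: the solutions of $\alpha_i c_i - \beta_j d_j = N$ in a fixed residue class form an arithmetic progression of step $(e_{ij}', e_{ij})$, and the period $p_{ij} = e_{ij} + e_{ij}' - 1$ of $\phi_{c_i, d_j}$ together with the identity $\sum \mu_k \cdot s + \sum \nu_k = \mathrm{lcm}(c_i, d_j)\, s + e_{ij}|b_j| + e_{ij}'|a_i| = -c_{a_i, b_j}(s)$ closes the summation with the correct ratio $q^{c_{a_i, b_j}(s)}$. Combining all contributions gives the claimed rational form. The main obstacle I anticipate is the combinatorial bookkeeping for $u, v \ge 2$: the diagonal lattice $\mathcal{L}^+$ has dimension only $u + v - 1$ while the target denominator carries $uv$ factors, so one must verify carefully that every simplicial subcone in the partition of $\mathcal{L}^+$, every near-diagonal strip, and every transitional contribution individually produces a denominator dividing $\prod_{i, j}(1 - q^{c_{a_i, b_j}(s)})$, with any surplus factors absorbed into the numerator polynomial $P(q^{-s})$.
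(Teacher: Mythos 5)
Your strategy is genuinely different from the paper's. The paper proves the proposition by induction on $u+v$: it splits off a single pair of generators $(a_1,b_1)$, applies the recursion $(c_k,d_k)\mapsto (c_{k+1},d_{k+1})$ of Lemma \ref{ez-base}/\ref{ez-induction}, and invokes Lemma \ref{technicallemma} to close the recursion after $p=e+e'-1$ steps, obtaining one factor $1-q^{c_{a_1,b_1}(s)}$ per inductive layer. You instead expand $Z_{\Delta_1\times\Delta_2}$ directly as a double lattice sum over $\mathbb N^u\times\mathbb N^v$, split it into the diagonal $A=B$, two stabilized half-spaces, and finitely many transitional slices, and treat each region via cone lattice-point generating functions. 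Both approaches rest on the same analytic inputs (Lemma \ref{first-important} and Corollary \ref{second-important}); what you gain is a more transparent geometric picture, what you lose is that the paper's recursion never has to manage a non-simple cone, whereas your version must.

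Two points need repair before the proposal becomes a proof.
First, the monoid $\mathcal L^+=\{(\alpha,\beta)\in\mathbb N^u\times\mathbb N^v: A(\alpha)=B(\beta)\}$ is not generated by the $uv$ vectors $v_{ij}$. For instance with $u=2$, $v=1$, $c_1=3$, $c_2=5$, $d_1=7$, the point $(1,5;4)$ lies in $\mathcal L^+$ but cannot be written as a non-negative integer combination of $v_{11}=(7,0;3)$ and $v_{21}=(0,7;5)$. What is true is that the $v_{ij}$ span the closed \emph{real} cone $\overline{\mathcal L^+\otimes\mathbb R^+}$ as its extreme rays. After a Denef--Hoornaert partition into simplicial subcones, the lattice points in each subcone are a finite union of translates (indexed by the half-open fundamental parallelepiped) of the free monoid on the chosen generators; only then does each subcone contribute a numerator polynomial over $\prod_{(i,j)\in\sigma}(1-q^{c_{a_i,b_j}(s)})$. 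This parallelepiped step is where the numerator $P$ actually comes from and must be spelled out.

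Second, your use of Lemma \ref{technicallemma} for the transitional strips is a misattribution. That lemma records the period and the aggregated exponents of the dynamical system $\phi_{c,d}$ on $\mathbb Z^2_{>0}$, which is precisely what drives the paper's one-generator-at-a-time recursion; it says nothing about the $(u+v-1)$-dimensional affine slice $\{A(\alpha)-B(\beta)=N\}$ when $u,v\ge 2$. To sum over a transitional slice you need the same cone machinery as for the diagonal: the recession cone of the slice is again the real cone spanned by the $v_{ij}$, a simplicial decomposition of the slice yields the denominator $\prod_{i,j}(1-q^{c_{a_i,b_j}(s)})$, and $G(N,0;s)$ or $G(0,N;s)$ supplies the factor $1-q^{-1-s}$ via Lemma \ref{first-important}. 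With those two points filled in, and noting that the constant factors $1-q^{-|a_i|}$, $1-q^{-|b_j|}$ arising from the coordinate rays in the stabilized regions are units absorbed into $P$, your decomposition does give the claimed form; but as written, the diagonal lattice claim is false as a monoid statement and the transitional argument appeals to the wrong lemma.
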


\begin{proof}

We prove the proposition by induction on $u+v$.

\textit{The base case} $u = v = 1$. First, considering the case where $c:=m_f(a_1)>0,$ $d:=m_g(b_1) > 0$, we define four sequences $(c_k)_{k \ge 1}, (d_k)_{l \ge 1}, (\mu_k)_{k\ge2}, (\nu_k)_{k\ge2}$ as in Section \ref{Sec32}, with $\tilde{c} = |a_1|$ and $\tilde{d} = |b_1|$. Define 
$$Z_{c,d}(s): = Z_{\Delta_1\times\Delta_2}(\pi^cf_\tau \oplus \pi^dg_\gamma;s).$$

\begin{lemma}\label{ez-base}
Let  $u = v = 1$. For all $k\ge1$,
\begin{equation*}
Z_{c_{k},d_{k}}(s) = \dfrac{Q_k(q^{-s})}{1 - q^{-1-s}} + q^{-\mu_{k+1}s - \nu_{k+1}}Z_{c_{k+1},d_{k+1}}(s),
\end{equation*}
where, for each $k\ge 1$, $Q_k$ is a rational polynomial.
\end{lemma}

\begin{proof}
We verify the lemma for $c_k = d_k$, the rest cases are proved in the same method. We have
\begin{align*}
Z_{c_k,d_k}(s) = &\int_{\{v(x) = 0\}\times\{v(y) = 0\}}|\pi^{c_k}f_\tau(x) +\pi^{d_k}g_\gamma(y)|^s|dxdy|\\
&+ \int_{\{v(x) = 0\}\times\{v(y) = lb_1, l \ge 1\}}|\pi^{c_k}f_\tau(x) +\pi^{d_k}g_\gamma(y)|^s|dxdy|\\
&+ \int_{\{v(x) = ka_1,k\ge1\}\times\{v(y) = 0\}}|\pi^{c_k}f_\tau(x) +\pi^{d_k}g_\gamma(y)|^s|dxdy|\\
&+ \int_{\{v(x) = ka_1,k\ge1\}\times\{v(y) = lb_1,l\ge1\}}|\pi^{c_k}f_\tau(x) +\pi^{d_k}g_\gamma(y)|^s|dxdy|.
\end{align*}
Lemma \ref{first-important} implies that the first integral is of the form $P_1(q^{-s})/(1 - q^{-1-s})$ for some $1$-variable rational polynomial $P_1$. Regarding the second integral, we have
\begin{align*}
\int_{\{v(x) = 0\}\times\{v(y) = lb_1, l \ge 1\}}&|\pi^{c_k}f_\tau(x) +\pi^{d_k}g_\gamma(y)|^s|dxdy|\\
&= \sum_{l = 1}^\infty q^{- l|b_1|}\int_{(\mathcal O^\times)^n\times(\mathcal O^\times)^m}|\pi^{c_k}f_\tau(x) +\pi^{d_k}g_\gamma(\pi^{lb_1}t)|^s|dxdt|\\
&= \sum_{l = 1}^\infty q^{- l|b_1|}\int_{(\mathcal O^\times)^n\times(\mathcal O^\times)^m}|\pi^{c_k}f_\tau(x) +\pi^{d_k+lm_g(b_1)}g_\gamma(t)|^s|dxdt|,
\end{align*}
in which the last equality occurs as $\gamma \subseteq F(b_1)$. If $lm_g(b_1) > 2c(\pi^{c_k}f_\tau,(\mathcal O^\times)^n) + 1$, then it implies from Corollary \ref{second-important} that
\begin{equation*}
\int_{(\mathcal O^\times)^n\times(\mathcal O^\times)^m}|\pi^{c_k}f_\tau(x) +\pi^{d_k+lm_g(b_1)}g_\gamma(t)|^s|dxdt| = \int_{(\mathcal O^\times)^n\times(\mathcal O^\times)^m}|\pi^{c_k}f_\tau(x)|^s|dxdt|.
\end{equation*}
From the latter, we can easily show that the second integral has the form $P_2(q^{-s})/(1 - q^{-1-s})$ for some $1$-variable rational polynomial $P_2$. The same argument also holds for the third integral. Finally, by the change of variables $x = \pi^{a_1}z,y = \pi^{b_1}t$, the last one can be computed as follows
\begin{align*}
& \int_{\{v(x) = ka_1,k\ge1\}\times\{v(y) = lb_1,l\ge1\}}|\pi^{c_k}f_\tau(x) +\pi^{d_k}g_\gamma(y)|^s|dxdy|\\ 
& \quad=  q^{-\mu_{k+1}s}\int_{\{v(x) = ka_1,k\ge1\}\times\{v(y) = lb_1,l\ge1\}}|f_\tau(x) +g_\gamma(y)|^s|dxdy|\\
&\quad = q^{-\mu_{k+1}s - |a_1| - |b_1|}\int_{\{v(x) = ka_1,k\ge0\}\times\{v(y) = lb_1,l\ge0\}}|\pi^{m_f(a_1)}f_\tau(z) +\pi^{m_g(b_1)}g_\gamma(t)|^s|dzdt|\\
&\quad = q^{-\mu_{k+1}s - \nu_{k+1}}Z_{c_{k+1},d_{k+1}}(s).
\end{align*}
The lemma follows.
\end{proof}
Back to the proof of Proposition \ref{important-computation}, we note that $c_1 = c_{1 + p}$, with $p$ defined in Lemma \ref{technicallemma}. Applying $p$ times Lemma \ref{ez-base} we get
\begin{equation*}
Z_{a_1,b_1}(s) = \frac{P(q^{-s})}{1 - q^{-1-s}} + q^{-(\sum_{k = 2}^{p+1} \mu_k)s - \sum_{k = 2}^{p+1}\nu_k}Z_{a_{1 +p},b_{1 + p}}(s),
\end{equation*}
where $P$ is a $1$-variable rational polynomial. From Lemma \ref{technicallemma}, we obtain
\begin{equation*}
Z_{a_1,b_1}(s) = \frac{P(q^{-s})}{(1 - q^{-1-s})(1 - q^{c_{a_1,b_1}(s)})}.
\end{equation*}
The same computation as in Lemma \ref{ez-base} gives us
\begin{equation*}
Z_{\Delta_1\times\Delta_2}(f_\tau \oplus g_\gamma;s) = \dfrac{H(q^{-s})}{(1 - q^{-1-s})} + q^{-|a_1|-|b_1|}Z_{a_1,b_1}(s),
\end{equation*}
where $H$ is a $1$-variable rational polynomial. This means that the proposition is proved in the case where $u=v=1$, $m_f(a_1)>0$ and $m_g(b_1) > 0$.

If $m_f(a_1) = 0$ and $m_g(b_1) > 0$, then
\begin{align*}
		Z_{\Delta_1\times\Delta_2}(f_\tau \oplus g_\gamma;s) &= \sum_{k,l = 0}^\infty\int_{\{v(x) = ka_1,\}\times\{v(y) = lb_1\}}|f_\tau(x) +g_\gamma(y)|^s|dxdy|\\
		&=\sum_{k = 0}^\infty \left(q^{-k|a_1|}\right)\sum_{l = 0}^\infty q^{- l|b_1|}\int_{(\mathcal O^\times)^n\times(\mathcal O^\times)^m}|f_\tau(z) + \pi^{lm_g(b_1)}g_\gamma(t)|^s|dzdt|.
	\end{align*}
	With $lm_g(b_1) > 2c(f_\tau,(\mathcal O^\times)^n) + 1$, from Corollary \ref{second-important}, we have
	\begin{equation*}
		\int_{(\mathcal O^\times)^n\times(\mathcal O^\times)^m}|f_\tau(z) + \pi^{lm_g(b_1)}g_\gamma(t)|^s|dzdt| = \int_{(\mathcal O^\times)^n\times(\mathcal O^\times)^m}|f_\tau(z)|^s|dzdt|.
	\end{equation*}
	Hence, there exists a rational polynomial $P$ such that
	\begin{equation*}
		Z_{\Delta_1\times\Delta_2}(f_\tau \oplus g_\gamma;s) = \dfrac{P(q^{-s})}{1 - q^{-1-s}}.
	\end{equation*}The case $m_f(a_1) > 0, m_g(b_1) = 0$ can be treated similarly as above.

	If $m_f(a_1) = m_g(b_1) = 0$, the proposition follows from Lemma \ref{first-important} and the following expansion
	\begin{align*}
		Z_{\Delta_1\times\Delta_2}(f_\tau \oplus g_\gamma;s) &= \sum_{k,l = 0}^\infty\int_{\{v(x) = ka_1,\}\times\{v(y) = lb_1\}}|f_\tau(x) +g_\gamma(y)|^s|dxdy|\\
		&=\left(\sum_{k,l = 0}^\infty q^{-k|a_1| - l|b_1|}\right)\int_{(\mathcal O^\times)^n\times(\mathcal O^\times)^m}|f_\tau(x) + g_\gamma(y)|^s|dxdy|.
	\end{align*}
	
	\textit{Induction hypothesis.} Suppose that the theorem is true for all Newton non-critical polynomials $f$ and $g$, for all faces $\tau$ and $\gamma$, for all cones $\Delta_1$ and $\Delta_2$ with the total number of generators does not exceed $u + v - 1$.
	
	Let us consider the first case where $m_f(a_1),m_g(b_1) > 0$. We define four sequences $(c_k)_{k \ge 1}$, $(d_k)_{k \ge 1}$, $(\mu_k)_{k\ge2},$ and $ (\nu_k)_{k\ge2}$ as in the base case. Put 
	$$Z_{c_k,d_k}(s) = Z_{\Delta_1\times \Delta_2}(\pi^{c_k}f_\tau \oplus \pi^{d_k}g_\gamma;s).$$ 
	
	\begin{lemma}\label{ez-induction}
		For all $k \ge 1$, we have
		\begin{equation*}
			Z_{c_k,d_k}(s) = \dfrac{Q_k(q^{-s})}{(1 - q^{-1-s})\prod_{(i,j)\ne(1,1)}(1 - q^{c_{a_i,b_j}(s)})} + q^{-\mu_{k+1}s - \nu_{k+1}}Z_{c_{k+1},d_{k+1}}(s),
		\end{equation*}
	where $Q_k$ is a $1$-variable rational polynomial.
	\end{lemma}

	\begin{proof}
		We are going to prove the lemma in the case $c_k = d_k$, the other cases are proved similarly. Define
		\begin{gather*}
			S_1 = \left\{\sum_{i=2}^uk_ia_i \mid k_2,\dots,k_u \in \mathbb N\right\},\hspace{5pt} S_2 = \left\{\sum_{i = 1}^uk_ia_i\mid  k_1 \ge 1, k_2,\dots,k_u \in \mathbb N\right\},\\
			T_1 = \left\{\sum_{j=2}^vl_jb_j \mid l_2,\dots,l_v \in \mathbb N\right\},\hspace{5pt} T_2 = \left\{\sum_{j = 1}^vl_jb_j\mid  l_1 \ge 1, l_2,\dots,l_u \in \mathbb N\right\}.
		\end{gather*}
	Then we have
	\begin{align*}
		Z_{c_k,d_k}(s)& = Z_{S_1\times T_1}(\pi^{c_k}f_\tau \oplus \pi^{d_k}g_\gamma;s) +  Z_{S_2\times T_1}(\pi^{c_k}f_\tau \oplus \pi^{d_k}g_\gamma;s)\\
		&\qquad + Z_{S_1\times T_2}(\pi^{c_k}f_\tau \oplus \pi^{d_k}g_\gamma;s) + Z_{S_2\times T_2}(\pi^{c_k}f_\tau \oplus \pi^{d_k}g_\gamma;s).
	\end{align*}
	Applying the induction hypothesis for functions $\pi^{c_k}f(x)$, $\pi^{d_k}g(y)$, for faces $\tau$, $\gamma$, and for $$S_1 = \bigoplus_{i = 2}^u\mathbb N a_i,\quad T_1 = \bigoplus_{j = 2}^v\mathbb N b_j,$$ 
	we have
	$$ Z_{S_1\times T_1}(\pi^{c_k}f_\tau \oplus \pi^{d_k}g_\gamma;s) = \dfrac{P_1(q^{-s})}{(1 - q^{-1-s})\prod_{i\ge 2,j\ge2}(1 - q^{c_{a_i,b_j}(s)})},$$
	for some rational polynomial $P_1$. By the change of variables  $x = \pi^{a_1}t$, we get
	\begin{equation*}
		Z_{S_2\times T_1}(\pi^{c_k}f_\tau \oplus \pi^{d_k}g_\gamma;s) = q^{-|a_1|}Z_{\Delta_1\times T_1}(\pi^{c_k + m_f(a_1)}f_\tau \oplus \pi^{d_k}g_\gamma;s).
	\end{equation*}
	By the induction hypothesis,
	\begin{equation*}
		Z_{\Delta_1\times T_1}(\pi^{c_k + m_f(a_1)}f_\tau \oplus \pi^{d_k}g_\gamma;s) = \dfrac{P_2(q^{-s})}{(1- q^{-1-s})\prod_{i\ge1,j\ge2}(1 - q^{c_{a_i,b_j}(s)})}
	\end{equation*}
	for some $1$-variable rational polynomial $P_2$. Therefore,
	\begin{equation*}
		Z_{S_2\times T_1}(\pi^{c_k}f_\tau \oplus \pi^{d_k}g_\gamma;s) = \dfrac{q^{-|a_1|}P_2(q^{-s})}{(1- q^{-1-s})\prod_{i\ge1,j\ge2}(1 - q^{c_{a_i,b_j}(s)})}.
	\end{equation*} Similarly, 
	\begin{equation*}
		Z_{S_1\times T_2}(\pi^{c_k}f_\tau \oplus \pi^{d_k}g_\gamma;s) = \dfrac{q^{-|b_1|}P_3(q^{-s})}{(1- q^{-1-s})\prod_{i\ge2,j\ge1}(1 - q^{c_{a_i,b_j}(s)})},
	\end{equation*}
	where $P_3$ is a $1$-variable rational polynomial. Finally, by the change of variables as $x = \pi^{a_1}z$ and $y= \pi^{b_1}t$, we get
	\begin{align*}
		Z_{S_2\times T_2}(\pi^{c_k}f_\tau \oplus\pi^{d_k}g_\gamma;s) &= q^{-\mu_{k+1}s}Z_{S_2\times T_2}(f_\tau \oplus g_\gamma;s)\\
		&=q^{-\mu_{k+1}s - |a_1|-|b_1|}Z_{\Delta_1\times\Delta_2}(\pi^{m_f(a_1)}f_\tau \oplus \pi^{m_g(b_1)}g_\gamma;s)\\
		&= q^{-\mu_{k+1}s - \nu_{k+1}}Z_{c_{k+1},d_{k+1}}(s).
	\end{align*}
	The lemma follows.
	\end{proof}
	The same computation as in the base case gives us the proposition when $m_f(a_1),m_g(b_1) > 0$. The either case $m_f(a_1) = 0$ or $m_g(b_1) = 0$ is treated similarly to the base by combining the change of variables formula and the induction hypothesis.
	\end{proof}


	\subsection{The $p$-adic zeta function} 
	Let $f(x) \in \mathcal O [x]$ and $g(y) \in \mathcal O [y]$ be polynomials, where $x=(x_1,\dots,x_n)$, $y=(y_1,\dots,y_m)$, let $\Gamma_f$ and $\Gamma_g$ be their Newton polyhedrons, respectively. For each facet of $\Gamma_f$, there exists a unique primitive vector perpendicular to it. Let $\mathcal D_f$ be the set of all such vectors, and we define $\mathcal D_g$ similarly for $g$. For $a \in \mathcal D_f$ and $b \in \mathcal D_g$, we denote by $\tau_a$ and $\gamma_b$ the facets corresponding to $a$ and $b$, respectively.
	
In this section, we are going to prove the following theorem,
\begin{theorem}\label{maintheorem}
Let $f(x) \in \mathcal O[x], g(y) \in \mathcal O [y]$ be two Newton non-critical polynomials. Then the Igusa local zeta function of $f \oplus g$ is equal to
\begin{equation*}
Z(f \oplus g ;s) = \dfrac{P(q^{-s})}{(1 - q^{-1-s})\prod_{a,b}(1 - q^{c_{a,b}(s)})},
\end{equation*}
where the product runs over all $(a,b)\in \mathcal D_f\times \mathcal D_g$, and  $P$ is a $1$-variable rational polynomial.
\end{theorem}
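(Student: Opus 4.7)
The plan is to mimic the proof of Proposition \ref{important-computation} and reduce the global integral to a finite sum of pieces to which that proposition applies. First I would use the decomposition $(\mathbb R^+)^n=\bigsqcup_\tau \Delta_{f,\tau}$ (and similarly for $g$), then apply the Denef--Hoornaert partition lemma to each cone so as to obtain a finite collection of simplicial cones $\Delta_{1,\alpha}\subseteq\overline{\Delta}_{f,\tau_\alpha}$ and $\Delta_{2,\alpha}\subseteq\overline{\Delta}_{g,\gamma_\alpha}$ whose generators lie in $\mathcal D_f$ and $\mathcal D_g$ respectively, and such that these cones yield a finite decomposition
\[
Z(f\oplus g;s)=\sum_\alpha Z_{\Delta_{1,\alpha}\times\Delta_{2,\alpha}}(f\oplus g;s),
\]
compatible with the partition of $\mathbb N^n\times\mathbb N^m$ at the integer-lattice level.

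Second, on each piece I would reduce $f\oplus g$ to the face-function sum $f_{\tau_\alpha}\oplus g_{\gamma_\alpha}$. For a lattice point $(v(x),v(y))=(\sum k_ia_i^\alpha,\sum l_jb_j^\alpha)$ and the change of variables $x=\pi^{v(x)}u$, $y=\pi^{v(y)}t$, any monomial of $f$ outside the face $\tau_\alpha$ contributes a term of $\pi$-order at least $m_f(v(x))+\min_ik_i$, and similarly for $g$ with $\min_jl_j$. When $\min_ik_i$ and $\min_jl_j$ both exceed the uniform threshold $2\max\{c(f_{\tau_\alpha},(\mathcal O^\times)^n),c(g_{\gamma_\alpha},(\mathcal O^\times)^m)\}+1$, which is finite by Newton non-criticality and the corollary of Zuniga-Galindo's proposition, Corollary \ref{second-important} allows replacing $f\oplus g$ by $f_{\tau_\alpha}\oplus g_{\gamma_\alpha}$ in the integrand. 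The residual contributions from the finite set of small $(k_i,l_j)$ reduce, by Lemma \ref{first-important}, to a rational polynomial in $q^{-s}$ with denominator $(1-q^{-1-s})$.

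Once the reduction is in place, Proposition \ref{important-computation} applied to each $\Delta_{1,\alpha}\times\Delta_{2,\alpha}$ gives
\[
Z_{\Delta_{1,\alpha}\times\Delta_{2,\alpha}}(f_{\tau_\alpha}\oplus g_{\gamma_\alpha};s)=\frac{P_\alpha(q^{-s})}{(1-q^{-1-s})\prod_{i,j}(1-q^{c_{a_i^\alpha,b_j^\alpha}(s)})}.
\]
Summing over the finitely many $\alpha$ and taking the common denominator $(1-q^{-1-s})\prod_{(a,b)\in\mathcal D_f\times\mathcal D_g}(1-q^{c_{a,b}(s)})$, and noting that factors with $c_{a,b}(s)=-\infty$ equal $1$ and therefore contribute trivially, yields the claimed formula.

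The hard part will be the reduction in Step 2: rigorously splitting each simplicial piece into a \emph{large-coefficient} region where Corollary \ref{second-important} brings in the face functions, and a \emph{small-coefficient} boundary region that must be handled directly. The $\pi$-order bookkeeping in the expansion of $f(\pi^{v(x)}u)+g(\pi^{v(y)}t)$ along simplicial directions is the technical heart, using $\min_ik_i$ as the leading error exponent and exploiting the finiteness of $c(f_{\tau_\alpha},(\mathcal O^\times)^n)$ and $c(g_{\gamma_\alpha},(\mathcal O^\times)^m)$ from Newton non-criticality. The combinatorial assembly of the polynomial corrections from all pieces into a single numerator $P(q^{-s})$ then follows once the common denominator structure is fixed.
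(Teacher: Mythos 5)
Your overall strategy — partition $(\mathbb{R}^+)^n\times(\mathbb{R}^+)^m$ into cones, refine to simplicial cones via Denef--Hoornaert, reduce $f\oplus g$ to the face functions $f_\tau\oplus g_\gamma$ where Corollary~\ref{second-important} allows, and feed the result into Proposition~\ref{important-computation} — is indeed the backbone of the paper's argument. But the step you flag as ``the hard part'' contains a real gap that you do not close, and it is precisely the place where the paper introduces a further, essential layer of induction.

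The problem is your claim that the residual contribution comes from ``the finite set of small $(k_i,l_j)$'' and can be disposed of by Lemma~\ref{first-important}. For a simplicial cone $\Delta_1=\bigoplus_{i=1}^u\mathbb{N}a_i$ with $u\ge2$, the complement of the region $\{\min_i k_i\ge M\}$ in $\mathbb{N}^u$ is \emph{infinite}: fixing $k_1=r_1<M$ still leaves $k_2,\dots,k_u$ unbounded, and on that slice $f(\pi^{\sum k_ia_i}z)$ does \emph{not} reduce to $f_\tau(z)$ up to a $\pi$-power, because $\min_i k_i$ stays small. Consequently the integral over such a slice is an infinite geometric-type sum of Lemma~\ref{first-important}-type integrals, and it is not at all clear a priori that this sum has denominator only $1-q^{-1-s}$; one expects new factors of the form $1-q^{c_{a_i,b_j}(s)}$ to appear from the unbounded directions. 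The paper's Lemma~\ref{important-lemma} handles exactly this by a double induction: fixing the small coordinates $k_i=r_i$ (for $i\in I$) performs a change of variables replacing $f$ by $f_{I,r_I}(z)=f(\pi^{\sum_{i\in I}r_ia_i+M\sum_{i\notin I}a_i}z)$ and reducing to the same statement over the lower-dimensional cone $\Delta_{1,I}=\bigoplus_{i\notin I}\mathbb{N}a_i$; the induction hypothesis (on the total number of generators) then supplies the denominator $\prod_{i\notin I,\,j\notin J}(1-q^{c_{a_i,b_j}(s)})$, which divides the global one. Without this recursion your ``small-coefficient boundary'' cannot be dispatched.

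A secondary omission: the Denef--Hoornaert lemma gives simplicial cones, not simple ones, so $\Delta_\tau^i\cap\mathbb{N}^n$ is generally a \emph{finite union of translates} of $\bigoplus_i\mathbb{N}a_i$, not the monoid itself. The paper's Corollary~\ref{main-cor} explicitly decomposes each $\Delta_\tau^i\cap\mathbb{N}^n$ into shifted copies $a+\bigoplus_i\mathbb{N}a_i$ and changes variables by $\pi^a$ before applying Lemma~\ref{important-lemma}. Your assertion that the simplicial decomposition is already ``compatible with the partition of $\mathbb{N}^n\times\mathbb{N}^m$ at the integer-lattice level'' glosses over this. Both gaps are fixable within your framework, but only by importing the paper's inductive machinery and shift decomposition rather than appealing directly to Lemma~\ref{first-important} and Proposition~\ref{important-computation}.
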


To prove the theorem, it is sufficient to show the following lemma.
	
\begin{lemma}\label{important-lemma}
Let $f(x) \in \mathcal O[x]$ and $ g(y) \in \mathcal O [y]$ be two Newton non-critical polynomials, let $\tau$ and $\gamma$ be proper faces of $\Gamma_f$ and $\Gamma_g$, respectively. Let $\{\tau_{a_1},\dots, \tau_{a_r}\}$ be all facets containing $\tau$, and let $\{\gamma_{b_1},\dots, \gamma_{b_t}\}$ be all facets containing $\gamma$. Assume that the sets $\{a_1,\dots,a_u\} \subseteq \{a_1,\dots,a_r\}$ and $\{b_1,\dots,b_v\} \subseteq \{b_1,\dots,b_t\}$ are $\mathbb{R}$-linearly independent sets. We put
\begin{align*}
\Delta_1:=  \bigoplus_{i = 1}^u \mathbb{N} a_i\quad \text{and}\quad \Delta_2 :=  \bigoplus_{j = 1}^v \mathbb{N} b_j.
\end{align*}
Then 
\begin{equation*}
Z_{\Delta_1 \times \Delta_2}(f \oplus g; s) = \dfrac{P(q^{-s})}{(1 - q^{-1-s})\prod_{i = 1}^u\prod_{j = 1}^v(1 - q^{c_{a_i,b_j}(s)})},
\end{equation*}
where $P$ is a $1$-variable rational polynomial.
\end{lemma}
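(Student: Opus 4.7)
The plan is to establish Lemma \ref{important-lemma} by induction on $u+v$, mimicking the proof of Proposition \ref{important-computation} with $f$ and $g$ replacing $f_\tau$ and $g_\gamma$. The main new ingredient is an application of Corollary \ref{second-important} to absorb the higher-order perturbations $f - f_{\tau_{a_i}}$ and $g - g_{\gamma_{b_j}}$ that arise after a change of variables.

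For the base case $u = v = 1$, the integration region $\Delta_1 \times \Delta_2$ is split into the four strata obtained by pairing $\{v(x) = 0\}$ or $\{v(x) = k a_1, k \ge 1\}$ with the analogous options for $v(y)$. On the stratum $\{v(x) = v(y) = 0\}$, Lemma \ref{first-important} applies since the Newton non-criticality of $f$ and $g$ ensures that $f \oplus g$ has no singular points on $(\mathcal O^\times)^{n+m}$. On each mixed stratum, the substitution $x = \pi^{k a_1} z$ (or $y = \pi^{l b_1} t$) yields
\begin{equation*}
f(\pi^{k a_1} z) = \pi^{k\, m_f(a_1)}\bigl(f_{\tau_{a_1}}(z) + \pi^k r_k(z)\bigr)
\end{equation*}
with $r_k \in \mathcal O[z]$, because $a_1 \cdot \omega \ge m_f(a_1) + 1$ whenever $\omega \notin \tau_{a_1}$. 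As soon as $k$ exceeds $2c(f_{\tau_{a_1}}, (\mathcal O^\times)^n) + 1$, Corollary \ref{second-important} allows the perturbation $\pi^k r_k$ to be discarded; the finitely many remaining low-$k$ indices contribute a finite sum of integrals of the form $P(q^{-s})/(1 - q^{-1-s})$ by a further application of Lemma \ref{first-important}. On the deep stratum, the periodic recursion of Lemma \ref{ez-base} (now applied to $\pi^c f$ and $\pi^d g$, which remain Newton non-critical since the Newton polyhedron and face functions are unchanged up to scalars) closes up via Lemma \ref{technicallemma} to yield the factor $1 - q^{c_{a_1, b_1}(s)}$; the cases $m_f(a_1) = 0$ or $m_g(b_1) = 0$ are handled verbatim as in Proposition \ref{important-computation}.

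For the induction step, I would decompose $\Delta_1 \times \Delta_2$ as in Lemma \ref{ez-induction}, into $(S_1 \sqcup S_2) \times (T_1 \sqcup T_2)$ where $S_1, T_1$ omit the generator $a_1$ (resp. $b_1$) and $S_2, T_2$ are the complementary parts. The piece $Z_{S_1 \times T_1}(f \oplus g; s)$ falls under the induction hypothesis; $Z_{S_2 \times T_1}$ and $Z_{S_1 \times T_2}$ reduce, via the change of variables $x = \pi^{a_1} z$ (resp. $y = \pi^{b_1} t$) combined with Corollary \ref{second-important} to absorb the new perturbation, to integrals on cones with one fewer generator in one factor, again accessible to the induction hypothesis; and $Z_{S_2 \times T_2}$ generates the periodic recursion of Lemma \ref{ez-induction}, closed by Lemma \ref{technicallemma}. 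The main obstacle is the uniform applicability of Corollary \ref{second-important} across these iterations: one must verify that the pertinent face functions $f_{\tau_{a_i}}, g_{\gamma_{b_j}}$ remain critical-point-free on the restricted $p$-adic domains (so that their thresholds $c$ are finite) and that each substitution bumps the perturbation's $\pi$-order by at least one, so that the threshold $2c + 1$ is eventually exceeded. Both properties follow from the Newton non-criticality hypothesis and the combinatorial structure of the cones $\overline{\Delta}_\tau$.
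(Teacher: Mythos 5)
Your plan parallels the paper's overall structure (stratify the cone, use Corollary \ref{second-important} to reduce to face functions, invoke Proposition \ref{important-computation} for the recursion), but it has a genuine gap in the treatment of the deep stratum. You propose to run ``the periodic recursion of Lemma \ref{ez-base} (now applied to $\pi^c f$ and $\pi^d g$)'' and close it with Lemma \ref{technicallemma}. This cannot work as stated: the recursion of Lemma \ref{ez-base} (and likewise Lemma \ref{ez-induction}) closes after $p=e+e'-1$ steps precisely because $f_\tau$ and $g_\gamma$ are weighted-homogeneous with respect to $a_1$ and $b_1$ --- the substitution $x=\pi^{a_1}z$, $y=\pi^{b_1}t$ returns the \emph{same} pair of functions up to $\pi$-powers governed by the periodic sequence $(c_k,d_k)$. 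The full polynomial $f$ is not weighted-homogeneous: each substitution produces $f(\pi^{ka_1}z)=\pi^{km_f(a_1)}\bigl(f_\tau(z)+\pi^k(\cdots)\bigr)$ with a \emph{different} perturbation, so after $p$ iterations you are integrating a function that is not a scalar multiple of what you started with, and the geometric series does not close. Your own observation that ``each substitution bumps the perturbation's $\pi$-order by at least one, so that the threshold $2c+1$ is eventually exceeded'' is the right intuition, but it also tells you that the bare recursion cannot be closed directly: you would first have to iterate until the perturbation is killable by Corollary \ref{second-important}, discard it, and only then land in the $f_\tau\oplus g_\gamma$ recursion at some intermediate point of its $p$-cycle, and sum a prefix plus a geometric tail. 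That computation is not carried out, and is exactly what the paper is structured to avoid.

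The paper's proof instead fixes the threshold $M=2c(f_\tau,(\mathcal O^\times)^n)+2c(g_\gamma,(\mathcal O^\times)^m)+2$ once and for all, and partitions each cone at level $M$ (into the deep part $S_\varnothing$ where all generator-coordinates are $\ge M$, and finitely many middle strata $S_{I,r_I}$ with some coordinates frozen below $M$). On $S_\varnothing\times T_\varnothing$ the perturbation already has $\pi$-order $\ge M$, so one \emph{single uniform} application of Corollary \ref{second-important} yields $Z_{S_\varnothing\times T_\varnothing}(f\oplus g;s)=Z_{S_\varnothing\times T_\varnothing}(f_\tau\oplus g_\gamma;s)$, after which Proposition \ref{important-computation} is applied to the homogeneous face functions; the middle strata drop a generator and go to the induction hypothesis. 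This requires $\tau=\bigcap_{i=1}^u\tau_{a_i}$ so that every monomial off $\tau$ is bumped by some $a_i$; to arrange that, the paper inducts lexicographically on $(r+t,\,u+v)$ and first replaces $\tau$ by $\tau'=\bigcap_i\tau_{a_i}$ when $\tau\subsetneq\tau'$ (which lowers $r+t$). Your induction on $u+v$ alone does not account for this reduction; either adopt the paper's double induction or restate the lemma so that $\tau$ is always $\bigcap_{i=1}^u\tau_{a_i}$.
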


\begin{proof}
We put 
$$\mathbf{I} = \{(A,B) \in \mathbb Z^2\mid  A \ge B \ge 2 \}$$ 
and provide it with the lexicographic order. The lemma is proved by induction on $(A,B) \in \mathbf{I}$, where $r + t = A, u + v = B$.
		
\textit{The base case} $ r = t = u = v = 1$. We first consider the case $m_f(a_1),m_g(b_1) > 0 $. Let $M = 2 c(f_\tau,(\mathcal O^\times)^n) + 2c(g_\gamma,(\mathcal O^\times)^m) + 2$. Define
\begin{gather*}
S_1 = \{ka_1: k\ge M\},\quad S_2 = \{ka_1: 0 \le k < M\}\\
T_1 = \{lb_1: l \ge M\},\quad T_2 = \{lb_1:0 \le l < M\}.
\end{gather*}
Then we have
\begin{equation}\label{expansion-base}
\begin{split}
Z_{\Delta_1 \times \Delta_2}(f\oplus g;s) = Z_{S_2\times T_2}(f\oplus g,s) &+ Z_{S_1\times T_2}(f\oplus g,s)\\
&+ Z_{S_2\times T_1}(f\oplus g,s) + Z_{S_1\times T_1}(f\oplus g,s).
\end{split}
\end{equation}
We have
\begin{align*}
Z_{S_2\times T_2}(f\oplus g,s) &= \sum_{k,l = 0}^{M - 1}\int_{\{v(x) = ka_1\}\times\{v(y) = lb_1\}} |f(x) + g(y)|^s |dxdy|\\
&= \sum_{k,l = 0}^{M - 1} q^{-k|a_1|-l|b_1|}\int_{(\mathcal O^\times)^n \times (\mathcal O^\times)^m } |f(\pi^{ka_1}x) + g(\pi^{lb_1}y)|^s |dxdy|.
\end{align*}
Since $f$ and $g$ are Newton non-critical, $f(\pi^{ka_1}x) + g(\pi^{lb_1}y)$ does not have any singular points in $(\mathcal O^\times)^n \times (\mathcal O^\times)^m$. By Lemma \ref{first-important}, 
$$\int_{(\mathcal O^\times)^n \times (\mathcal O^\times)^m } |f(\pi^{ka_1}x) + g(\pi^{lb_1}y)|^s |dxdy|=\frac{P(q^{-s})}{1 - q^{-1-s}},$$ 
where $P$ is a $1$-variable rational polynomial. So $Z_{S_2\times T_2}(f\oplus g,s)$ also has the same form. 
By the change of variables formula we have
\begin{equation*}
Z_{S_1\times T_2}(f\oplus g,s) = \sum_{l = 0}^{M - 1} q^{-l|b_1|}\sum_{k = M}^\infty q^{-k|a_1|} \int_{(\mathcal O^\times)^n \times (\mathcal O^\times)^m } |f(\pi^{ka_1}x) + g(\pi^{lb_1}y)|^s |dxdy|.
\end{equation*}
Note that
\begin{equation*}
f(\pi^{ka_1}x) = \pi^{km_f(a_1)}\hat{f}_k(x),
\end{equation*}
where $\hat{f}_k(x)$ is a polynomial in $\mathcal O[x]$. For $km_f(a_1) > 2c(g(\pi^{lb_1}y), (\mathcal O^\times)^m) + 1$, it implies from Corollary \ref{second-important} that
\begin{equation*}
\int_{(\mathcal O^\times)^n \times (\mathcal O^\times)^m } |f(\pi^{ ka_1}x) + g(\pi^{lb_1}y)|^s |dxdy| = \int_{(\mathcal O^\times)^n \times (\mathcal O^\times)^m } |g(\pi^{lb_1}y)|^s |dxdy|.
\end{equation*}
Hence, we get
\begin{equation*}
Z_{S_1\times T_2}(f\oplus g;s) = \dfrac{P(q^{-s})}{1 - q^{-1-s}},
\end{equation*}
where $P$ is a $1$-variable rational polynomial. The same argument also holds for $Z_{S_2\times T_1}(f\oplus
g;s)$. Finally, we have
\begin{align*}
Z_{S_1\times T_1}(f\oplus g;s) &= \sum_{k,l = M}^\infty \int_{\{v(x) =ka_1\}\times\{v(y) =lb_1\}}|f(x) + g(y)|^s|dxdy|\\
&= \sum_{k,l = M}^\infty q^{-k|a_1|-l|b_1|}\int_{(\mathcal O^\times)^n\times(\mathcal O^\times)^m}|f(\pi^{ka_1}z) + g(\pi^{lb_1}t)|^s|dzdt|.
\end{align*}
Since $ka_1 \in \Delta_\tau$, we have $f_\tau(\pi^{ka_1}z) = \pi^{ km_f(a_1)}f_\tau(z)$. For each $\omega \notin \tau = F_f(a_1)$, we have $\omega\cdot a_1 \ge m_f(a_1) + 1$, hence
\begin{equation*}
\omega\cdot(ka_1) \ge km_f(a_1) + k \ge  km_f(a_1) + M,
\end{equation*}
for all $\omega \notin \tau$ and $k \ge M$. Therefore, 
$$f_\tau(\pi^{ka_1}z) = \pi^{km_f(a_1)}(f_\tau(z) + \pi^M\hat{f}_\tau(z))$$ 
for some $\hat{f}_\tau(z) \in \mathcal O[z]$. Similarly, 
$$g(\pi^{lb_1}t) = \pi^{ lm_g(b_1)}(g_\gamma(t) + \pi^M\hat{g}_\gamma(t))$$ 
for some $\hat{g}_\gamma(t) \in \mathcal{O}[t]$. If $km_f(a_1) \ge lm_g(b_1)$, then by Corollary \ref{second-important}, we get
\begin{align*}
\int_{(\mathcal O^\times)^n\times(\mathcal O^\times)^m}&|f(\pi^{ka_1}z) + g(\pi^{lb_1}t)|^s|dzdt|\\
&=q^{-lm_g(b_1)s}\int_{(\mathcal O^\times)^n\times(\mathcal O^\times)^m}|\pi^{km_f(a_1) -  lm_g(b_1)}f_\tau(z) + g_\gamma(t) + \pi^Mh(z,t)|^s|dzdt|\\
&=q^{-lm_g(b_1)s}\int_{(\mathcal O^\times)^n\times(\mathcal O^\times)^m}|\pi^{km_f(a_1) - lm_g(b_1)}f_\tau(z) + g_\gamma(t)|^s|dzdt|\\
&=\int_{(\mathcal O^\times)^n\times(\mathcal O^\times)^m}|f_\tau(\pi^{ka_1}z) + g_\gamma(\pi^{lb_1}t)|^s|dzdt|,
\end{align*}
where $h(z,t)$ is in $\mathcal O [z,t]$. Therefore, we have
\begin{equation*}
\int_{(\mathcal O^\times)^n\times(\mathcal O^\times)^m}|f(\pi^{ka_1}z) + g(\pi^{lb_1}t)|^s|dzdt|=\int_{(\mathcal O^\times)^n\times(\mathcal O^\times)^m}|f_\tau(\pi^{ka_1}z) + g_\gamma(\pi^{lb_1}t)|^s|dzdt|.
\end{equation*}
Note that the above equality is also true when $km_f(a_1) \le lm_g(b_1)$, so we have $$Z_{S_1\times T_1}(f\oplus g;s) = Z_{S_1\times T_1}(f_\tau\oplus g_\gamma;s).$$
By the change of variables $x =\pi^{Ma_1}z$ and $y =\pi^{Mb_1}t$, we have
\begin{align*}
Z_{S_1 \times T_1}(f_\tau \oplus g_\gamma;s) &= q^{-M|a_1| -M|b_1|}Z_{\Delta_1 \times \Delta_2}(\pi^{Mm_f(a_1)}f_\tau \oplus \pi^{Mm_g(b_1)}g_\gamma;s)\\
 &= \frac{P(q^{-s})}{(1 - q^{-1-s})(1 - q^{c_{a_1,b_1}(s)}))},
\end{align*}
where $P$ is a $1$-variable rational polynomial. The last equality is obtained by applying Proposition \ref{important-computation} to $\pi^{Mm_f(a_1)}f$ and $\pi^{Mm_g(b_1)}g$. Thus, from (\ref{expansion-base}), the lemma follows.

\textit{Induction hypothesis.} Suppose that the lemma is valid for any pair of Newton smooth polynomials $(f,g)$, and for any pair of faces $(\tau,\gamma)$ together with any pair of cones $(\Delta_1,\Delta_2)$ satisfying the condition $(r + s, u + v) < (A , B)$. Now, let $f$ and $g$ be Newton non-critical polynomials, and assume that the faces and cones $\tau$, $\gamma$, $\Delta_1$ and $\Delta_2$ satisfy the condition $r + s = A, u + v = B$. We put $\tau' = \cap_{i = 1}^u \tau_{a_i}, \gamma' = \cap_{j = 1}^v \gamma_{b_j}$. If $\tau_a$ is a facet containing $\tau'$, then it also contains $\tau$, hence $a \in \{a_1,\dots,a_r\}$. Since $\tau'$ is the intersection of all facets containing it, $ \tau $ is a proper face of $\tau'$ if and only if the number of facets containing $\tau'$ is less than $r$. If $\tau \subsetneq \tau'$ or $\gamma \subsetneq \gamma'$, we can apply induction hypothesis for $f,g$ and $\tau',\gamma',
\Delta_1, \Delta_2$ and the lemma follows. When this is not the case, we have $\tau = \tau'$ and $\gamma = \gamma'$.
Put 
$$M = 2c(f_\tau,(\mathcal O^\times)^n) + 2c(g_\gamma,(\mathcal O^\times)^m) + 2.$$  
For $\varnothing \ne I = \{i_1,\dots,i_k\} \subseteq \{1,\dots,u\}$ and $r_I = (r_{i_1},\dots,r_{i_k})$ with $0\le r_{i_l}\le M - 1$, $1\leq l\leq k$, we put
\begin{align*}
S_{I,r_I}& = \left\{\sum_{i\in I}k_ia_i + \sum_{i \notin I}k_ia_i:k_i = r_i\text{ if }i\in I,  k_i \ge M \text{ if }i\notin I \right\},\\
S_\varnothing &= \left\{\sum_{i = 1}^u k_ia_i\mid  k_1,\dots,k_r\ge M \right\}.
\end{align*}
Similarly, we can define $T_{J,t_J}$ and $T_\varnothing$ for any $\varnothing \ne J = \{j_1,\dots,j_k\} \subseteq \{1,\dots,v\}$ and any $t_J = (t_{j_1},\dots,t_{j_k})$ with $0\le t_{j_l} \le M - 1$, $1\leq l\leq k$. We obtain following partitions of $\Delta_1$ and $\Delta_2$, respectively,
\begin{align*}
\Delta_1 = \bigsqcup_{I,r_I} S_{I,r_I} \cup S_\varnothing, \quad \Delta_2 = \bigsqcup_{J,t_J} T_{J,t_J} \cup T_\varnothing.
\end{align*}
Then
\begin{equation}\label{epansion-induction}
\begin{aligned}
Z_{\Delta_1\times\Delta_2}(f\oplus g;s) = \sum_{I,r_I,J,t_J}& Z_{S_{I,r_i}\times T_{J,t_J}}(f \oplus g;s) + \sum_{I,r_i}Z_{S_{I,r_I}\times T_\varnothing}(f\oplus g;s)\\
&\quad +\sum_{J,t_J}Z_{S_\varnothing\times T_{J,t_J}}(f\oplus g;s) + Z_{S_\varnothing\times T_\varnothing}(f\oplus g;s).
\end{aligned}
\end{equation}
For nonempty sets $I$ and $J$, we consider the following change of variables
\begin{equation*}
x = \pi^{\sum_{i\in I}r_ia_i + M\sum_{i\notin I}a_i}z, \quad  y = \pi^{\sum_{j \in J}t_jb_j + M\sum_{j\notin J}b_j}t.
\end{equation*}
Then we have
\begin{align*}
&Z_{S_{I,r_i}\times T_{J,t_J}}(f\oplus g;s) \\
&\qquad\quad = q^{-\sum_{i\in I}r_i|a_i| - M\sum_{i\notin I}|a_i| - \sum_{j\in J}t_j|b_j| - M\sum_{j\notin J}|b_j|}Z_{\Delta_{1,I}\times\Delta_{2,J}}(f_{I,r_I}\oplus g_{J,t_J};s),
\end{align*}
where 
\begin{align*}
\Delta_{1,I}:= \bigoplus_{i \notin I} \mathbb{N} a_i, \quad \Delta_{2,J} := \bigoplus_{j \notin J} \mathbb{N} b_j,
\end{align*}
and 
\begin{align*}
f_{I,r_I}(z)& = f(\pi^{\sum_{i\in I}r_ia_i + M\sum_{i\notin I}a_i}z),\\ g_{J,t_J}(t)& = g(\pi^{\sum_{j \in J}t_jb_j + M\sum_{j\notin J}b_j}t).	
\end{align*}
Applying the induction hypothesis to $f_{I,r_I}$, $g_{J,t_J}$, $\tau_I := \cap_{i\notin I}\tau_{a_i}$ and $\gamma_J := \cap_{j \notin J}\gamma_{b_j}, \Delta_{1,I},\Delta_{2,J}$, we have
\begin{equation*}
Z_{\Delta_{1,r_I}\times\Delta_{2,t_J}}(f_{I,r_I}\oplus g_{J,t_J};s) = \dfrac{P(q^{-s})}{(1 - q^{-1-s})\prod_{i\notin I,j \notin J}(1 - q^{c_{a_jb_j}(s)})},
\end{equation*} 
for some $1$-variable rational polynomial $P$. Therefore, $Z_{S_{I,r_i}\times T_{J,t_J}}(f\oplus g;s)$ also has the same form. A similar argument also holds for either $I$ or $J$ empty but not for both empty. We have
\begin{align*}
Z_{S_\varnothing\times T_\varnothing}(f\oplus g;s) &= \sum_{k_1,\dots,k_r = M}^\infty\sum_{l_1,\dots,l_t = M}^\infty \int_{\{v(x) = \sum k_ia_i\}\times\{v(y) = \sum l_jb_j\}}|f(x) + g(y)|^s|dxdy|.
\end{align*}
By the change of variables formula,
\begin{align*}
&\int_{\{v(x) = \sum k_ia_i\}\times\{v(y) =\sum l_jb_j\}}|f(x) + g(y)|^s|dxdy|\\
&\qquad\quad =q^{-\sum_{i = 1}^u k_i|a_i| - \sum_{l = 1}^v l_j|b_j|}\int_{(\mathcal O^\times)^n\times(\mathcal O^\times)^m}|f(\pi^{\sum_{i = 1}^u k_ia_i}z) + g(\pi^{\sum_{j = 1}^v l_jb_j}t)|^s|dzdt|.
\end{align*}
As $a_i \in \overline{\Delta}_\tau$ for all $i$, we have
\begin{equation*}
f(\pi^{\sum_{i = 1}^u k_ia_i}z) = \pi^{\sum_{i = 1}^u k_im_f(a_i)}f_\tau(z) + \sum_{\omega \notin \tau} a_\omega\pi^{(\sum_{i = 1}^u k_ia_i)\cdot\omega}z^\omega.
\end{equation*}
Moreover, for any $\omega \notin \tau = \cap_{i = 1}^u \tau_{a_i}$, there is an $i_\omega \in \{1,\dots,u\}$ such that $\omega \notin \tau_{a_{i_\omega}} = F_f(a_{i_\omega})$. Therefore 
\begin{equation*}
(\sum_{i = 1}^u k_ia_i)\cdot\omega \ge \sum_{i = 1}^u k_i m_f(a_i) + k_{i_\omega} \ge \sum_{i = 1}^u k_i m_f(a_i) + M,
\end{equation*}
hence 
$$f(\pi^{\sum_{i = 1}^u k_ia_i}z) = \pi^{\sum_{i = 1}^u k_im_f(a_i)}(f_\tau(z) + \pi^M \hat{f}_\tau(z))$$ 
for some $\hat{f}_\tau(z) \in \mathcal O[z]$. Similarly, we also have 
$$g(\pi^{\sum_{j = 1}^v l_jb_j}t) = \pi^{\sum_{j = 1}^v l_jm_g(b_j)}(g_\gamma(t) + \pi^M\hat{g}_\gamma(t)).$$ 
The same argument as in the base case gives us 
$$Z_{S_\varnothing\times T_\varnothing}(f\oplus g;s) = Z_{S_\varnothing\times T_\varnothing}(f_\tau \oplus g_\gamma;s).$$ 
By change of variables $x = \pi^{M\sum_{i = 1}^u a_i}z$ and $y = \pi^{M \sum_{j = 1}^v b_j}t$, we get
\begin{align*}
Z_{S_\varnothing\times T_\varnothing}(f_\tau \oplus g_\gamma;s) &= q^{-M\sum_{i = 1}^u |a_i| - M\sum_{j = 1}^v |b_j|}Z_{\Delta_1\times \Delta_2}(f_\tau(\pi^{M\sum_{i = 1}^u a_i}z)+  g_\gamma(\pi^{M \sum_{j = 1}^v b_j}t);s)\\
&= q^{-M\sum_{i = 1}^u |a_i| - M\sum_{j = 1}^v |b_j|}Z_{\Delta_1\times \Delta_2}(\pi^{M\sum_{i = 1}^u m_f(a_i)}f_\tau \oplus  \pi^{M \sum_{j = 1}^v m_g(b_j)}g_\gamma;s)\\
&= \dfrac{P(q^{-s})}{(1 - q^{-1-s})\prod_{i = 1}^u\prod_{j = 1}^v(1 - q^{c_{a_i,b_j}(s)})},
\end{align*}
for some $1$-variable rational polynomial $P$. The last equality follows from Proposition \ref{important-computation} for $\pi^{M\sum_{i = 1}^u m_f(a_i)}f_\tau(x)$ and $\pi^{M \sum_{j = 1}^v m_g(b_j)}g_\gamma(y)$. The lemma now follows from (\ref{epansion-induction}).
\end{proof}

\begin{corollary}\label{main-cor}
Let $f(x) \in \mathcal O[x]$ and $g(y) \in \mathcal O [y]$ be Newton smooth polynomials, let $\tau$ and $\gamma$ be proper faces of $\Gamma_f$ and $\Gamma_g$, respectively. Let $\{\tau_{a_1},\dots, \tau_{a_r}\}$ be all facets containing $\tau$ and $\{\gamma_{b_1},\dots, \gamma_{b_t}\}$ be all facets containing $\gamma$. Then 
\begin{equation*}
Z_{\Delta_\tau \times \Delta_\gamma}(f\oplus g; s) = \dfrac{Q(q^{-s})}{(1 - q^{-1-s})\prod_{i = 1}^r\prod_{j = 1}^t(1 - q^{-c_{a_i,b_j}(s)})},
\end{equation*}
where $Q$ is a $1$-variable rational polynomial.
\end{corollary}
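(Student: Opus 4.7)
\medskip

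\textbf{Proof proposal.} The plan is to reduce the corollary to Lemma \ref{important-lemma} by subdividing the cone $\Delta_\tau \times \Delta_\gamma$ into pieces whose generators are $\mathbb{R}$-linearly independent subsets of the facet normals. By definition $\Delta_\tau$ is strictly positively spanned by $a_1,\dots,a_r$ (the unique primitive vectors perpendicular to the facets of $\Gamma_f$ containing $\tau$), and similarly $\Delta_\gamma$ is strictly positively spanned by $b_1,\dots,b_t$. Applying the Denef--Hoornaert partition lemma from Section~3.1, I would decompose
\begin{equation*}
\Delta_\tau = \bigsqcup_{\alpha} \Delta_{1,\alpha}, \qquad \Delta_\gamma = \bigsqcup_{\beta} \Delta_{2,\beta},
\end{equation*}
where each $\Delta_{1,\alpha}$ is strictly positively spanned by an $\mathbb{R}$-linearly independent subset $\{a_i\}_{i\in I_\alpha}\subseteq\{a_1,\dots,a_r\}$, and analogously for $\Delta_{2,\beta}$ with a subset $\{b_j\}_{j\in J_\beta}\subseteq\{b_1,\dots,b_t\}$.

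Taking products gives a partition $\Delta_\tau\times\Delta_\gamma = \bigsqcup_{\alpha,\beta} \Delta_{1,\alpha}\times\Delta_{2,\beta}$, so that by additivity of the Haar measure
\begin{equation*}
Z_{\Delta_\tau\times\Delta_\gamma}(f\oplus g;s) = \sum_{\alpha,\beta} Z_{\Delta_{1,\alpha}\times\Delta_{2,\beta}}(f\oplus g;s).
\end{equation*}
Each pair $(\Delta_{1,\alpha},\Delta_{2,\beta})$ satisfies the hypotheses of Lemma~\ref{important-lemma} with the faces $\tau$ and $\gamma$ (since $\{a_i\}_{i\in I_\alpha}$ and $\{b_j\}_{j\in J_\beta}$ are linearly independent subsets of the facet normals containing $\tau$ and $\gamma$ respectively), so each summand has the form $P_{\alpha,\beta}(q^{-s})/[(1-q^{-1-s})\prod_{i\in I_\alpha,j\in J_\beta}(1-q^{c_{a_i,b_j}(s)})]$ for some one-variable rational polynomial $P_{\alpha,\beta}$.

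Finally, bringing the finite sum to the common denominator $(1-q^{-1-s})\prod_{i=1}^{r}\prod_{j=1}^{t}(1-q^{c_{a_i,b_j}(s)})$, which is divisible by the denominator of each summand, yields a single rational function of the required shape, proving the corollary. The one technical point that deserves care is the compatibility of the set-theoretic cone subdivision with the integer lattice implicit in $E_{\Delta_{1,\alpha}\times\Delta_{2,\beta}}$: the cones $\bigoplus_{i\in I_\alpha}\mathbb{N} a_i$ used in Lemma~\ref{important-lemma} recover all integer points of $\Delta_{1,\alpha}$ only when the generating set extends to a $\mathbb{Z}$-basis (a simple cone). In the non-simple case I would further refine the Denef--Hoornaert decomposition into simple sub-cones, handling the finitely many lattice-point translates by pulling out a factor $q^{-(\text{shift})\cdot(s,1)}$ and reducing to the untranslated integral; this is the only step requiring genuine bookkeeping and is the main obstacle, but it does not alter the form of the denominator and the conclusion is preserved.
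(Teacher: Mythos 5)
Your overall plan coincides with the paper's: both partition $\Delta_\tau$ and $\Delta_\gamma$ via the Denef--Hoornaert lemma into simplicial cones spanned by $\mathbb R$-linearly independent subsets of the facet normals, apply Lemma~\ref{important-lemma} piecewise, and combine over a common denominator. But the way you propose to resolve the lattice-point mismatch is not quite right and, taken literally, would fail to produce a denominator of the required shape. If you further refine a non-simple rational simplicial cone into simple sub-cones, you must introduce new ray generators lying in the interior of the cone; these are not among the facet normals $a_1,\dots,a_r$ (resp. $b_1,\dots,b_t$), so the contributions you would obtain from Proposition~\ref{important-computation} or Lemma~\ref{important-lemma} would involve factors $1-q^{c_{a',b'}(s)}$ for those new vectors $a',b'$, which need not divide $\prod_{i,j}(1-q^{c_{a_i,b_j}(s)})$. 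The form of the denominator in the corollary is tied to the original facet normals, so the simple-cone refinement is the wrong fix here.

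What the paper does instead keeps the generators fixed: it observes that $\Delta_\tau^i\cap\mathbb N^n$ decomposes as a finite disjoint union of translates $a+\bigoplus_{i=1}^u\mathbb N a_i$, with $a$ running over the lattice points of the half-open fundamental parallelepiped $\{\sum_{i=1}^u k_ia_i : 0< k_i\le 1\}$, and similarly for $\Delta_\gamma^j$. The change of variables $x=\pi^a z$, $y=\pi^b t$ pulls out the factor $q^{-|a|-|b|}$ and converts each translated piece into $Z_{\{\bigoplus_i\mathbb N a_i\}\times\{\bigoplus_j\mathbb N b_j\}}\bigl(f(\pi^a z)+g(\pi^b t);s\bigr)$, to which Lemma~\ref{important-lemma} applies with the same facet normals and hence yields the correct factors $1-q^{c_{a_i,b_j}(s)}$. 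You gesture at this translate trick in the same sentence, but it is an alternative to, not a consequence of, refining into simple cones: once you have a simple cone there are no translates left to handle, while here the whole point is to avoid changing the generating rays. If you replace the simple-cone refinement with the translate decomposition just described, the rest of your argument (additivity, common denominator) goes through and you recover the paper's proof.
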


\begin{proof}
There exists a finite partition of $\Delta_\tau$ into cones $\Delta_\tau^i$ such that $\Delta_\tau^i$ is spanned by the  $\mathbb R$-linearly independent set $\{a_1,\dots,a_u\} \subseteq \{a_1,\dots,a_r\}$ . Similarly, each $\Delta_\gamma$ can be partitioned into cones $\Delta_\gamma^j$ such that $\Delta_\gamma^j$ is spanned by the  $\mathbb R$-linearly independent set $\{b_1,\dots,b_v\} \subseteq \{b_1,\dots,b_t\}$. Note that, each $\Delta_\tau^i \cap \mathbb N^n$ is the disjoint union of the sets $a + \bigoplus_{i = 1}^u \mathbb N a_i$, where $a$ runs through all elements of
\begin{equation*}
\mathbb N^n \cap \left\{\sum_{i =1 }^uk_ia_i: 0<k_i\le 1\right\}.
\end{equation*}
We also obtain a similar partition for $\Delta_\gamma^j$. We have,
\begin{equation}\label{ExpansionOfCone}
\begin{split}
Z_{\Delta_\tau\times\Delta_\gamma}(f\oplus g;s) &= \sum_{i,j} Z_{\Delta_\tau^i\times\Delta_\gamma^j}(f\oplus g;s)\\
&= \sum_{i,j} \sum_{a,b} Z_{\{a + \bigoplus_{i = 1}^u \mathbb N a_i\}\times\{b + \bigoplus_{j = 1}^v \mathbb N b_j\}}(f\oplus g;s).
\end{split}
\end{equation}
By the change of variables $x = \pi^az$ and $y = \pi^bt$, we have
\begin{equation*}
Z_{\{a + \bigoplus_{i = 1}^u \mathbb N a_i\}\times\{b + \bigoplus_{j = 1}^v \mathbb N b_j\}}(f\oplus g;s) = q^{-|a| - |b|}Z_{\{\bigoplus_{i = 1}^u \mathbb N a_i\}\times\{\bigoplus_{j = 1}^v \mathbb N b_j\}}(f(\pi^az) + g(\pi^bt);s).
\end{equation*}
The corollary follows by applying Lemma \ref{important-lemma} for $f(\pi^az)$ and $g(\pi^bt)$ and the expansion (\ref{ExpansionOfCone}).
\end{proof}

We now prove the main theorem.

\begin{proof}[Proof of Theorem \ref{maintheorem}]
We have the following partitions
\begin{equation*}
(\mathbb{R}^+)^n = \{0\}\cup \bigcup_{\tau}\Delta_\tau, \quad (\mathbb{R}^+)^m = \{0\}\cup \bigcup_{\gamma}\Delta_\gamma,
\end{equation*}
where $\tau,\gamma$ runs through all faces of $\Gamma_f,\Gamma_g$ respectively. Therefore,
\begin{align*}
Z(f\oplus g;s) &= \sum_{k \in \mathbb N^n}\sum_{l \in \mathbb N^m}\int_{\{v(x) = k\}\times\{v(y) = l\}}|f(x) + g(y)|^s|dxdy|\\
&= \int_{(\mathcal O^\times)^n\times(\mathcal O^\times)^m}|f(x) + g(y)|^s|dxdy| +\int_{(\mathcal O^\times)^n\times\mathcal O^n}|f(x) + g(y)|^s|dxdy|\\
&\qquad \qquad \qquad +\int_{\mathcal O^n\times(\mathcal O^\times)^m}|f(x) + g(y)|^s|dxdy| + \sum_{\tau,\gamma}Z_{\Delta_\tau\times\Delta_\gamma}(f\oplus g;s).
\end{align*}
Thus Theorem \ref{maintheorem} follows by applying Lemma \ref{first-important} for three first integrals and Corollary \ref{main-cor} for the last one.
\end{proof}

\begin{ack}
The first author thanks the Vietnam Institute for Advanced Study in Mathematics (VIASM) for warm hospitality during his visit. He would also like to acknowledge support from the ICTP through the Associates Programme (2020-2025). 
\end{ack}

\end{document}